\newcommand{\red}[1]{\textcolor{black}{#1}}
\theoremstyle{plain}
\newtheorem{dummy}{anything}[section]
\newtheorem{theorem}[dummy]{Theorem}
\newtheorem{lemma}[dummy]{Lemma}
\newtheorem{proposition}[dummy]{Proposition}
\newtheorem{corollary}[dummy]{Corollary}
\theoremstyle{definition}
\newtheorem{definition}[dummy]{Definition}
\newtheorem{remark}[dummy]{Remark}
\theoremstyle{remark}
\newcommand{\del}{\partial}
\newcommand{\Z}{\mathbb{Z}}
\newcommand{\R}{\mathbb{R}}
\newcommand{\C}{\mathbb{C}}
\newcommand{\SLZ}{\textrm{SL}_2(\Z)}
\newcommand{\coupe}{\setminus\!\!\setminus}
\newcommand{\id}{\mathrm{id}}
\def\a{\alpha}
\def\t{\mu}
\def\l{\lambda}
\newcommand{\Sp}{\mathbb{S}}
\newcommand{\ob}{\mathop{\rm OB}\nolimits}
\newcommand{\bn}{\mathop{\rm bn}\nolimits}
\renewcommand{\t}{\mathbf t}
\def\o{\omega}
\def\S{\Sigma}
\def\t{\theta}
\begin{document}

\title[]{Complex vs convex Morse functions \linebreak and geodesic open books}

\author{Pierre Dehornoy}
\address{Univ. Grenoble Alpes, CNRS, Institut Fourier, F-38000
Grenoble, France}
\email{pierre.dehornoy@univ-grenoble-alpes.fr}

\author{Burak Ozbagci}
\address{Department of Mathematics \\ Ko\c{c} University \\ Istanbul, Turkey}
\email{bozbagci@ku.edu.tr}
%\subjclass[2000]{57R17}
%\date{\today}

%\date{\today}

\begin{abstract} Suppose that $\S$ is a closed and oriented surface equipped with a Riemannian metric. In the literature, there are three seemingly distinct constructions of open books on the unit (co)tangent bundle of $\S$,  having  complex, contact, and dynamical flavors, respectively.
Each one of these constructions is based on either an admissible divide or an ordered Morse function on $\S$. 
We show  that the resulting open books are pairwise isotopic provided that the ordered Morse function is adapted to the admissible divide on~$\S$. 
Moreover, we observe that if $\S$ has positive genus,  then none of these open books are planar and furthermore,  we determine the only cases when they have genus one pages.
\end{abstract}

\maketitle

\section{Introduction}\label{int}

Let $\S$ be a closed and oriented surface. 
The bundle of cooriented lines tangent to  $\S$ (aka the space of cooriented contact elements), which we denote by $V(\S)$ in this paper,  is defined as the set of pairs $(q, L)$ where $q \in \S$ and $L$ is a cooriented line in $T_q\S$.
\red{When $\S$ is equipped with a Riemannian metric,}
one can identify $V(\S)$ with the unit tangent bundle $ST\S$ as well as with the unit cotangent bundle $ST^*\S$.   As an oriented $3$-manifold,  $ST^*\S$ (resp. $ST\S$)  is diffeomorphic to the circle bundle over~$\S$ with Euler number $-\chi (\S)$ (resp. $\chi (\S)$). We orient the circle bundle $V(\S)$ so that it is fiber and orientation preserving diffeomorphic to $ST^*\S$.

Let $DT^*\S$ denote the disk cotangent bundle, which is a disk bundle over $\S$ with Euler number $-\chi (\S)$, whose boundary is  $ST^*\S$. Let $\lambda$ denote the Liouville  $1$-form on $T^*\S$. Under the identification of $ST^*\S$ with $V(\S)$,  the canonical contact structure $\xi =\ker \lambda|_{ST^*\S}$  on $ST^*\S$ coincides with the canonical contact structure on $V(\S)$, which we again denote by $\xi$ throughout the article.

A \emph{divide} $P \subset \S$ is a generic immersion of the disjoint union of finitely many copies of the unit circle. Here generic means that the image has neither self tangencies nor triple intersections. An edge of $P$ is the closure of a component of $P \setminus \{\mbox{double points}\}$ and a region of $P$ is a component of $\S  \setminus P$. 

A \emph{divide} $P$ is said to be  \emph{admissible} if it is connected,  each region of $P$  is simply connected and $\S \setminus P$ admits a black-and-white coloring so that the two regions on opposite sides of an edge in $P$ have opposite colors. See Figure~\ref{F:Divide} below, for an example of an admissible divide on a genus two surface.

A {\em Morse function} $f: \S \to \R$ is a smooth function that has finitely many  non-degenerate critical points. The Morse function $f$ is said to be {\em ordered} if the higher the index of a critical point the greater its value.
 
A {\em Morse function} $f: \S \to \R$ is {\em adapted} to a given admissible divide $P \subset \S$ if one has~$P= f^{-1}(0)$,  each double point of $P$ corresponds to a critical point of~$f$ of index~$1$, and  each black (resp. white) region of $\S \setminus P$ contains exactly one index~$2$ (resp.~$0$) critical point of~$f$. 

We recall in Section~\ref{sec: ACampo} that for every admissible divide~$P \subset \S$, there exists an ordered Morse function $f: \S \to \R$ adapted to $P$,  whose isotopy class is uniquely determined by the pair $(\S, P)$~\cite{aca, i}.

An admissible divide $P \subset \S$ is said to be {\em convex} with respect to the given metric on $\S$,  if $P$  is a set of geodesics on $\S$, every geodesic on~$\S$ meets $P$ in bounded time, and every region of~$\S\setminus P$ can be foliated by concentric closed curves with non-vanishing curvature.

An open book for a closed and oriented $3$-manifold $M$ is a pair $(B, \pi)$ so that  $B$ is an oriented link in $M$ called the {\em binding} of the open book and $\pi : M \setminus B \to  S^1$ is a fibration such that $\pi^{-1}(\theta)$ is the interior of a compact surface $S_\theta \subset M$ with $\partial  S_\theta= B$ for all $\theta \in S^1$, and in a tubular neighborhood of the form~$D^2\times S^1$ of~$B$ the fibration is given by~$(r,\theta, t)\mapsto\theta$. The surface $S=S_\theta$, for any $\theta$, is called the {\em page} of the open book.

Let $(B, \pi)$ and $(B', \pi')$ be two open books for closed and oriented $3$-manifolds $M$ and~$M'$, respectively. An {\em isomorphism} from $(B, \pi)$ to $(B', \pi')$  is a diffeomorphism $\varphi : M \to M'$  such that $B' =\varphi (B)$ and $\pi = \pi' \circ \varphi$. When $M = M'$, an {\em isotopy}  from~$(B, \pi)$ to~$(B', \pi')$  is an isomorphism  $\varphi$ which is isotopic to the identity through diffeomorphisms of $M$.

\begin{theorem} \label{thm: main} 
Suppose that $P$ is an admissible  divide on a closed and oriented surface~$\S$ equipped with a Riemannian metric, and let $f: \S \to \R$ be  an ordered Morse function adapted to $P$. 
Then the open books described in the constructions (1) - (3) below belong to the same isotopy class under the pairwise identifications of the oriented 3-manifolds $ST\S$, $ST^*\S$ and~$V(\S)$, which we describe in Section~\ref{sec: identify}.

 {\em  (1)  (A'Campo \cite{ac}  $\&$ Ishikawa \cite{i}) } The Morse function $f: \S \to \R$ can be extended to a complex Morse function  $f_{\C}: T\S \to \C$, which restricts to an {\em achiral}\footnote{There is orientation error in \cite[Lemma 2.6]{i}: The Lefschetz fibration $DT\S \to D^2$  Ishikawa constructs  must be {\em achiral}  or  equivalently, it must be considered on the cotangent (rather than tangent) disk bundle $DT^*\S$.}  Lefschetz fibration $DT\S \to D^2$.  There is an induced open book for the boundary  $ST\S$. \red{By reversing the orientation, there is a Lefschetz fibration $DT^*\S \to D^2$, which induces an open book for  $ST^*\S$.}

{\em (2) (Giroux \cite{g})} The Morse function $f: \S \to \R$ can be modified to an ordered\footnote{The fact that $f_{\xi}$ is ordered, provided that $f$ is ordered, was observed by Massot \cite{m} along with some other details of Giroux's construction \cite[Example 4.9]{g} restricted to  dimension three.} $\xi$-convex (or contact) Morse function  $f_{\xi}: V(\S) \to \R$, which induces an open book for $V(\S)$ that supports $\xi$.

{\em (3)  (Birkhoff  \cite{b} \& Fried \cite{f})}  If $P$ is convex, then  the lift of $P$  to $ST\S$ is the binding of a ''geodesic'' open book, that is an open book whose pages are negative\footnote{We use the term \emph{negative} since the natural orientation given by the geodesic flow of the binding of this open book is opposite to that induced from a natural orientation of the page.} Birkhoff cross sections of the geodesic flow.  \end{theorem}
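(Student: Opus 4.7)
The plan is to construct a single combinatorial model open book $\mathcal{O}_P$ on $V(\Sigma)$ directly from $P$ and $f$, and verify that each of (1)--(4) is isomorphic to it under the canonical identifications $ST\Sigma \cong ST^*\Sigma \cong V(\Sigma)$. In the model, the binding $\widehat{P} \subset V(\Sigma)$ is the canonical Legendrian lift of $P$, consisting of pairs $(q,L)$ with $q \in P$ and $L$ a tangent line to a smooth branch of $P$ at $q$; a generic page is the lift of a cooriented regular level set of $f$ to $V(\Sigma)$; and the monodromy is a product of right-handed Dehn twists, one for each double point of $P$ (equivalently, each index-1 critical point of $f$), along curves supported in neighborhoods of the components of $\widehat{P}$.

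For the equivalence $(1) \cong (2)$, both constructions produce Lefschetz fibrations on the disk (co)tangent bundle with the same regular fiber (a lift of a regular level set of $f$) and with vanishing cycles that are Legendrian lifts of the components of $P$, indexed by the double points of $P$ in the order dictated by the values of $f$. Since a Lefschetz fibration on a compact $4$-manifold with boundary is determined up to isomorphism by its vanishing cycle data, the two fibrations agree, and restricting to the boundary recovers $\mathcal{O}_P$. For $(3) \cong \mathcal{O}_P$, the $\xi$-convex Morse function $f_\xi : V(\Sigma) \to \R$ obtained from $f$ has critical points in natural bijection with those of $f$; using the explicit local models in Giroux's construction as refined in dimension three by Massot, the stable/unstable manifolds of $f_\xi$ assemble into the binding $\widehat{P}$, while angular level sets recover pages isotopic to lifts of regular level sets of $f$. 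Finally, for $(4) \cong \mathcal{O}_P$ under the convexity hypothesis, the tangent lift of the geodesic divide $P$ is literally a union of periodic orbits of the geodesic flow, so its binding is $\widehat{P}$; each Birkhoff cross section restricted to the lift of a region of $\Sigma \setminus P$ is foliated by lifts of the concentric curves of non-vanishing geodesic curvature, and is isotopic rel boundary to the corresponding lift of a level set of $f$ in that region.

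The hardest step is expected to be $(3) \cong \mathcal{O}_P$, because Giroux's construction lives intrinsically on $V(\Sigma)$ rather than on a $4$-dimensional filling, so the comparison must be performed directly at the open book level rather than via Lefschetz fibration data. Concretely, one must verify that the local model of $f_\xi$ near each lift of a double point of $P$ is precisely the contact model whose monodromy around the associated binding arc is a right-handed Dehn twist along the corresponding vanishing cycle from (1) and (2); this requires careful tracking of coorientations at each index-$1$ point. Once this is established, the supported-contact-structure assertions in (2) and (3) are automatic, since the supported contact structure is an invariant of the isomorphism class of the open book. The equivalence $(4) \cong \mathcal{O}_P$ is then geometrically more transparent, depending only on the explicit concentric foliation guaranteed by convexity together with the isotopy of Birkhoff cross sections to the $f$-level-set lifts inside each region of $\Sigma \setminus P$.
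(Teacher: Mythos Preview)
Your model $\mathcal{O}_P$ is built on several incorrect identifications, and each comparison step would fail as stated. Most seriously, the binding is wrong: in $V(\Sigma)$ the binding of all four open books is the \emph{conormal} lift of $P$, i.e.\ the set of $(q,L)$ with $L$ \emph{normal} to $P$ at $q$, not the tangent lift you describe. Your ``Legendrian lift'' is genuinely Legendrian for $\xi$, and a Legendrian link can never be the binding of an open book \emph{supporting} $\xi$, since the binding must be positively transverse. Under the identification $V(\Sigma)\cong ST\Sigma$ (cooriented line $\mapsto$ positive unit normal) the conormal lift becomes the set of unit vectors tangent to $P$, which is exactly the link $L(P)$ appearing in (1) and (4). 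Second, the monodromy and vanishing-cycle data are not as you claim: the Lefschetz fibrations in (1) and (2) have one vanishing cycle for \emph{each} critical point of $f$, of every index, not only the index-$1$ points; and these cycles are not lifts of the components of $P$ but rather cores of ``roundabouts'' at the double points together with curves encircling the white and black regions of $\Sigma\setminus P$. Matching these ordered collections under an explicit fiber diffeomorphism is exactly how the paper proves $(1)\cong(2)$.

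For $(3)$, your assertion that the critical points of $f_\xi$ are in bijection with those of $f$ is also false: each critical point of $f$ lifts to \emph{four} critical points of $f_\xi$ on $V(\Sigma)$, one for each cooriented eigenline of the linearized gradient, so there is no hope of reading off the same Dehn-twist factorization directly. The paper accordingly does not compare $(1)$ and $(3)$ via monodromy at all. Instead it shows, in explicit local coordinates near each index-$1$ point, that the four distinguished pages $S_0,S_{\pi/2},S_\pi,S_{3\pi/2}$ of the A'Campo--Ishikawa open book coincide (up to isotopy) with the characteristic surface $C_{\widehat X}$ and the level set $f_\xi^{-1}(0)$, and then invokes the elementary fact that an open book on a closed $3$-manifold is determined up to isomorphism by a single page. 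The comparison with $(4)$ is done the same way, by matching the page $S_0$ with Birkhoff's section.
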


In the above context of $\S$ a closed and oriented surface with a Riemannian metric, and an admissible divide $P \subset \S$, the construction described in item (1) determines a unique isotopy class of open books for $ST\S$ (and also  for $ST^*\S$). We call this isotopy class as the {\em A'Campo-Ishikawa open book} for $ST\S$ (and also for $ST^*\S$). For each ordered Morse function $f: \S \to \R$ adapted to~$P$, which is unique up to isotopy, the construction described in item (2) determines a unique isotopy class of open books which is  called the {\em Giroux open book} for  $V(\S)$. 
Finally, if the admissible divide $P \subset \S$ is convex with respect to the given metric on $\S$, then the construction described in item (3) determines a unique isotopy class of open books, which is called the {\em geodesic open book} for  $ST\S$. 
In the same vein, there is a {\em cogeodesic open book} for $ST^*\S$, whose pages are {\em positive} Birkhoff cross sections of the cogeodesic flow. 
Note that for each of the constructions in items (1)-(3), we make some auxiliary choices. 
However, Proposition~\ref{prop: onepageisotopic}, which does not seem to have appeared in this form in the literature before, guarantees that these auxiliary choices do not affect the isotopy class of the described open books, so that each of the constructions in items (1)-(3) describes a unique isotopy class of open books as we claimed above.

\begin{proposition}\label{prop: onepageisotopic}
\red{Two open books $(B, \pi)$ and $(B', \pi')$ for a closed and oriented $3$-manifold~$M$ are isotopic, provided that the 
 pages $\overline{\pi^{-1}(0)}$ and $\overline{\pi'^{-1}(0)}$ are isotopic in~$M$. }
\end{proposition}

\begin{remark} Two open books for a closed and oriented $3$-manifold $M$ with a common binding are not necessarily isomorphic~\cite[Example 2.2]{et06}.
\end{remark} 

The content of Theorem~\ref{thm: main} is that the A'Campo-Ishikawa open book, the Giroux open book and the (co)geodesic open book are mapped to each other under prescribed pairwise diffeomorphisms between $ST\S$, $ST^*\S$ and~$V(\S)$. The proof of Theorem~\ref{thm: main} also relies on Proposition~\ref{prop: onepageisotopic}.  Thanks to  Proposition~\ref{prop: onepageisotopic}, it suffices to compare one page of each of the considered open books in order to prove Theorem~\ref{thm: main}. It turns out that each of these open books have {\em four} particular pages, and we will show in Section~\ref{sec: proof} that there is an isotopy between these particular pages in all three constructions.

Let $(B, \pi)$ an open book for a closed an oriented $3$-manifold $M$.  A contact structure $\eta$ on $M$ is supported by $(B, \pi)$ if $\eta$  is  the kernel of a contact $1$-form $\a$ satisfying the following conditions: (i) $d\a$ is a positive area form on each page $S_\theta$ of the open book $(B, \pi)$ and (ii) $\a > 0$ on the binding $B$. In this case, we also say that the open book $(B, \pi)$ is adapted to the contact $3$-manifold $(M, \eta)$. 

Note that the \red{Giroux} open book supports the canonical contact structure $\xi$ on $V(\S) \cong ST^*\S$ by construction. Moreover,  the ``dual" of the geodesic open book \red{obtained by} the cogeodesic flow on $ST^*\S$, also supports $\xi$, since the cogeodesic flow agrees with the Reeb flow of the Liouville $1$-form $\l$ (see, for example, \cite[Theorem 1.5.2]{ge}). As we explain in Remark~\ref{rem: error} below, Ishikawa's Lefschetz fibrations can be considered on $DT^*\S$ (rather than  $DT\S$) and thus they induce open books for $ST^*\S$. Therefore, the A'Campo-Ishikawa open book supports  $\xi$ as well when viewed for $ST^*\S$, as a consequence of Theorem~\ref{thm: main}.

\begin{corollary}\label{coro: i}
For each admissible divide $P$ on a closed and oriented \red{Riemannian}  surface $\S$, the contact structure supported by the A'Campo-Ishikawa open book (when viewed for $ST^*\S$) is isotopic to the canonical contact structure $\xi$.
\end{corollary}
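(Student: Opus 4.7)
The plan is to deduce the corollary immediately from Theorem~\ref{thm: main}, using the fact that open books that are isomorphic (as abstract open books on the same $3$-manifold, via the natural identifications fixed in the statement of the theorem) support contact structures that are isotopic, and that the Johns open book in item (2) and the Giroux open book in item (3) support $\xi$ \emph{by construction}.

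First I would pick any ordered Morse function $f : \S \to \R$ adapted to the admissible divide $P$; such a function exists by the observation recorded in the introduction. Applied to this $f$, Theorem~\ref{thm: main} yields that the four open books (1)--(4) on $V(\S) \cong ST^*\S \cong ST\S$ are pairwise isomorphic under the natural identifications. In particular, the A'Campo--Ishikawa open book on $ST^*\S$ (with the orientation correction noted in the footnote, so that the Lefschetz fibration is taken on $DT^*\S$) is isomorphic to the Johns open book on $ST^*\S$ in item (2).

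The final step is to transfer the support property along this isomorphism. By construction, the Johns open book supports the canonical contact structure $\xi = \ker\lambda|_{ST^*\S}$. The isomorphism of open books furnished by Theorem~\ref{thm: main} is realized by a diffeomorphism of $ST^*\S$ that identifies bindings with bindings and pages with pages, so the pushforward of $\xi$ under this diffeomorphism is a contact structure supported by the A'Campo--Ishikawa open book. Since $\xi$ is preserved up to isotopy under the natural identifications, and since a contact structure supported by a given open book is unique up to isotopy by the Giroux--Thurston--Winkelnkemper correspondence, it follows that the A'Campo--Ishikawa open book also supports $\xi$. The only subtle point, which is where a careful check is required rather than being a real obstacle, is to verify that the identifications used to pass between $ST\S$ and $ST^*\S$ are compatible with the contact-geometric identification $\xi = \ker\lambda|_{ST^*\S}$; this is exactly the content of the orientation convention fixed in the introduction, so no further work is needed.
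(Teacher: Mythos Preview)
Your proposal is correct and follows exactly the route the paper takes: the corollary is deduced immediately from Theorem~\ref{thm: main} together with the fact that the open books in items (2) and (3) support $\xi$ by construction. The paper does not even spell out a separate proof---it simply records the corollary ``as a consequence of Theorem~\ref{thm: main}''---so your write-up is, if anything, more detailed than needed; the paragraph about pushing $\xi$ forward and invoking uniqueness can be shortened to the observation that the natural identifications $ST\S \cong V(\S) \cong ST^*\S$ carry $\xi$ to $\xi$, as stated in the introduction.
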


Our method of proof also implies that for each connected divide on $D^2$, the open book that A'Campo constructs for $\Sp^3$  supports the standard contact structure, which can also be deduced from the main result of another article of Ishikawa~\cite{i2}. Notice that  A'Campo already proved \cite[Section 4]{aca} that the binding of his open book is transverse to the standard contact structure, which is a necessary condition.

\begin{remark} The A'Campo-Ishikawa open book is independent of the given metric on the surface $\S$, up to isomorphism, and the Giroux open book for $V(\S)$ does not need a metric on $\S$, by its construction. Moreover, when $\S$ is of genus at least two and the metric is hyperbolic, the geodesic flow is independent of the metric, up to homeomorphism, by a theorem of Gromov \cite{gr} (see Section~\ref{sec: Birkhoff} for details) and hence the geodesic open book is uniquely determined up to isomorphism.  More generally, if $\S$ is of any genus and equipped with two different  metrics,  the resulting geodesic open books are {isomorphic} as long as $P$ is {\em convex}  with respect to both metrics.  \end{remark}

Provided that the genus of $\S$ is at least one, none of the open books mentioned in Theorem~\ref{thm: main} are planar, by construction.  
This is consistent with the fact that the support genus of $(ST^*\S, \xi)$ is equal to one for a surface $\S$ of positive genus,  which follows by combining an obstruction to planarity due to Etnyre \cite{et}, with constructions from Birkhoff-Fried (construction (3) in Theorem~\ref{thm: main}~\cite{b, f}), or from Massot \cite{m} when the genus of $\S$ is at least two and Van Horn-Morris \cite{vhm} for a surface $\S$ of genus one (see \cite{oz} for further details). 
The only case for which the open book in Theorem~\ref{thm: main} is planar is the one where the divide $P$ is a single embedded circle on~$S^2$. The corresponding open book adapted to $(ST^*S^2 = \R P^3, \xi)$ is planar, whose page is an annulus and monodromy the square of the Dehn twist along the core circle. Here, we characterize those open books in Theorem~\ref{thm: main} with genus one pages.

\begin{theorem}\label{thm: genusone}
Up to homeomorphism, there are exactly three admissible divides on a closed and oriented surface $\S$ of genus at least one, as depicted in Figure~\ref{F:GenusOne},  that yield genus one open books obtained by any of the methods listed in Theorem~\ref{thm: main}.   These open books have~$4g, 4g+2,$ and $4g+4$ binding components,  respectively.
\end{theorem}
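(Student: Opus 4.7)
The plan is to use Theorem~\ref{thm: main} to reduce the statement to a combinatorial question about the divide~$P$: since the four open books from items~(1)--(4) are pairwise isomorphic, I can compute the Euler characteristic of the page from one construction and the number of binding components from another, obtaining a page-genus formula entirely in terms of the divide.

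First, I would use the Johns Lefschetz fibration $f_\omega : DT^*\S \to D^2$ from item~(2). Each critical point of the adapted Morse function $f$ produces one Lefschetz critical point---$w$ minima in the white regions, $d$ saddles at the double points, and $b$ maxima in the black regions---so the fibration has $n = w+d+b$ critical points. The standard identity $\chi(F) = \chi(DT^*\S) - n$ for a Lefschetz fibration over the disk, combined with $\chi(DT^*\S) = \chi(\S) = 2-2g$ and the 4-valent Euler formula $w+b-d = 2-2g$ for $P\subset\S$, yields $\chi(F) = -2d$ for a regular fiber.

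Next, I would use the Giroux construction of item~(3) (or the Birkhoff construction of item~(4) when~$P$ is convex) to count the binding components. The binding is the canonical lift $\widetilde P \subset V(\S)$ assigning to each smooth point of $P$ its two cooriented tangent lines, while at a double point the two transverse branches of $P$ lift independently. Hence each of the $c$ immersed circle components of $P$ contributes exactly two disjoint closed curves to the binding, giving $b_p = 2c$. Combining with $\chi(F) = 2 - 2g_p - b_p$ yields the formula
\[
g_p \;=\; 1 + d - c,
\]
so the page has genus one if and only if $c = d$.

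The remaining step, which is the main obstacle, is to classify admissible divides on $\S$ of genus $g \geq 1$ with $c = d$. The Euler relation gives $w + b = 2 - 2g + c$, and the checkerboard condition $w, b \geq 1$ produces the lower bound $c \geq 2g$. For the upper bound, I would case-split on the distribution of crossing events among the $c$ circles: the total number of events is $2d = 2c$, so each circle carries exactly~$2$ events on average, and admissibility (disk complementary regions, 2-colorability, connectedness of $P$) severely restricts the allowed incidence patterns---each circle is either a simple closed curve crossed transversely twice by other circles, or bears a self-intersection that fits tightly with the rest. This combinatorial analysis should show that the only possibilities are $c \in \{2g, 2g+1, 2g+2\}$, each realised by a unique topological type; these are precisely the three divides of Figure~\ref{F:GenusOne}, yielding open books with $4g$, $4g+2$, and $4g+4$ binding components respectively.
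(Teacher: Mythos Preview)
Your derivation of the page-genus formula $g_p = 1 + d - c$ is correct, and although you reach it via the Lefschetz Euler characteristic identity rather than the paper's direct count on the page~$S_{\pi/2}$, the reduction to the combinatorial condition $c = d$ is the same. Your lower bound $c \geq 2g$ from the Euler relation is also sound (the paper does not isolate it, but it is implicit).

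The gap is in your final paragraph. Knowing that the $c$ circles carry $2c$ crossing events \emph{on average} is not enough; you need that each circle carries \emph{exactly} two. The paper extracts this from two facts you mention but do not exploit: 2-colourability of $\S \setminus P$ forces every circle to carry an \emph{even} number of double points (traversing a circle, each crossing flips the colour of the adjacent region, so a closed loop meets an even number), and connectedness of $P$ forces each circle to carry at least two. Combined with the average, this gives exactly two per circle---and in particular rules out the self-intersection case you left open (a self-intersection contributes two events to a single circle, which then cannot meet any other, contradicting connectedness once $c\ge 2$). With exactly two double points on each circle, the dual graph (vertices $=$ circles, edges $=$ double points) is $2$-regular and connected, hence a single cycle: $P$ is a cyclic chain of simple closed curves.

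The second missing ingredient, which is the heart of the argument, is the classification of such cyclic chains on an orientable surface. The paper does \emph{not} bound $c$ from above and then search for divides, as your plan suggests; instead it takes a ribbon neighbourhood $\S_P$ of the chain $r_1,\dots,r_k$ (with $k=c$), observes that orientability leaves no freedom in attaching $r_{i+1}$ to $r_i$, so the only topological choice is how the last ribbon is glued back to the first, and enumerates the possible matchings of the four boundary arcs. For even $k$ there are two matchings (giving $\S_P$ four or two boundary circles), for odd $k$ essentially one (three boundary circles); capping each boundary circle with a disk and computing the resulting genus then \emph{forces} $k = 2g+2$, $2g$, or $2g+1$ respectively, and simultaneously establishes uniqueness up to homeomorphism. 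Your phrase ``this combinatorial analysis should show'' is standing in for exactly this ribbon-gluing argument, and without it the plan does not close.
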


As another consequence of  Theorem~\ref{thm: main}, one can draw the following conclusion.
The monodromy of the Giroux open book, which is not immediately clear from the construction, can be computed via Theorem~\ref{thm: main}, since the monodromy of the A'Campo-Ishikawa open book is given explicitly as a product of Dehn twists~\cite{aca} and the monodromies of  the A'Campo-Ishikawa and the geodesic open books are given as a product of two involutions~\cite{acb, dl}.

\subsection{Pairwise identification of the $3$-manifolds in Theorem~\ref{thm: main}} \label{sec: identify} The identification of the bundle of cooriented lines $V(\S)$ with \red{the unit tangent bundle} $ST\S$ is obtained by taking  the unit tangent vector positively normal to the given cooriented line.  The identification of $V(\S)$  with $ST^*\S$ is obtained by further composing the identification above with the natural bundle map $ST\S \to ST^*\S$ induced by the Riemannian metric on $\S$. Equivalently, a more direct identification of $V(\S)$  with $ST^*\S$ is obtained by taking  for  each cooriented line $L$ in $T_q\S$,  the unit covector (a linear map $T_q \S \to \R$)  in $T_q^*\S$, whose kernel is $L$ with its coorientation. In other words, the bundle of cooriented lines tangent to  $\S$ has a natural identification with the bundle of rays in $T^*\S$, which therefore can be denoted by $ST^*\S$ (see, for example, \cite{ma}), but for the purposes of this paper, we opted to denote the bundle of cooriented lines by $V(\S)$, to distinguish it from the unit cotangent bundle, although they are orientation-preserving  diffeomorphic to each other.

\subsection{Open book constructions in chronological order} Our goal here is to bring together seemingly unrelated work in the literature over a span of  almost one hundred years, going all the way back to Birkhoff's article  \cite{b} on dynamical systems, where he constructs a ``surface of section" for the geodesic flow on the unit tangent bundle $ST\S$. The existence of such a surface reduces  the study of some dynamical properties of the flow to understanding a self-homeomorphism of this surface. Birkhoff's approach was later popularized by Fried \cite{f}, who utilized it to study arbitrary transitive Anosov flows on closed $3$-manifolds. With the terminology used in this paper, the surface of section $S$ is a page of an open book for $ST\S$,  so that the interior of $S$ is transverse to the geodesic flow, while  the binding $\del S$ is  a collection of periodic orbits.

The notion of a divide on $D^2$, and the link of a divide  in $\Sp^3$, was introduced by A'Campo \cite{aca, ac}, who was  interested in computing geometric monodromies of isolated plane curve singularities. In particular, A'Campo proved that the link of any isolated plane curve singularity appears as the link of a  divide, the link of a connected divide is fibered, and that this fibration is a model for the Milnor fibration of the singularity. Note that, for each connected divide, A'Campo's construction gives nothing but an open book for $\Sp^3$  whose binding is the link of the given divide.  The work of A'Campo was later generalized by Ishikawa \cite{i}, who showed that the link of an admissible divide\footnote{Birkhoff used the term \emph{primary set of curves}  for what Ishikawa (following A'Campo) called an {\em admissible divide.}}  on an arbitrary surface $\S$ (not just $D^2$) is fibered in~$ST\S$  and moreover $DT\S$ admits an {\em achiral} Lefschetz fibration over $D^2$,  which restricts to  the same open book for its boundary.

In another direction, at the turn of this century, a major breakthrough in contact topology was achieved by Giroux \cite{gi}, who proved that every closed contact manifold is convex, which is equivalent to the fact that every contact manifold admits an adapted open book with Weinstein pages. By combining with the earlier work of Giroux \cite{g}, it follows in particular that $V(\S) \cong ST^*\S$ admits an open book  \cite{m} that supports its canonical contact structure $\xi$.

In this article, we relate the three different constructions of open books mentioned in the last three paragraphs, where the first one involves  the dynamics of the geodesic flow, the second one is motivated by the study of isolated plane curve singularities, and the third one illustrates the power of convexity in contact topology. 

\begin{remark} \label{rem: johns} Building on the work of Seidel \cite{s}, and inspired by the work of A'Campo \cite{ac},  Johns \cite{jo}  constructed an exact symplectic Lefschetz fibration on the symplectic disk cotangent bundle $(DT^*\S, \o)$, which in turn, restricts to yet another open book for $ST^*\S$.  The difference from Ishikawa's Lefschetz fibration is that Johns' fibration is not just a smooth fibration but adapted to the canonical symplectic structure $\o=d\l$, meaning that each regular fiber is a symplectic submanifold. The main input in Johns' construction of the Lefschetz fibration $DT^*\S \to D^2$ is also a Morse function $f: \S \to \R$. As a matter of fact, based on the handle decomposition of the surface $\S$ given by $f$, Johns describes how to construct (the diffeomorphism type of) the regular fiber of an  exact symplectic Lefschetz fibration $DT^*\S \to D^2$, and  the Lagrangian vanishing cycles explicitly on this fiber. Roughly speaking, Johns' Lefschetz fibration can be viewed as a symplectic convexification of the Morse function $f$, as opposed to a contact convexification due to Giroux, which we discuss in Section~\ref{sec: giroux}. Using methods similar to the ones employed in~\cite{oz}, one can prove that, once a suitable Morse function is fixed on $\S$, the Lefschetz fibrations of Johns and Ishikawa are actually isomorphic.

\end{remark} 

\begin{remark}  It is not true that every open book adapted to $(ST^*\S, \xi)$ arises from an admissible divide or an ordered Morse function on $\S$, using any one of the (eventually equivalent) constructions in Theorem~\ref{thm: main}. This is simply because each open book appearing in Theorem~\ref{thm: main} has an even number of binding components and there is no reason for this to hold for an arbitrary open book adapted to $(ST^*\S, \xi)$. For example, a genus one open book  adapted to~$(ST^*T^2 = T^3, \xi)$ with {\em three} binding components, was constructed by Van Horn-Morris \cite{vhm} (see also \cite[Section~3]{dd} for similar examples obtained by the first author). Note that one can always get an adapted open book with an odd number of binding components, by positively stabilizing an adapted open book with an even number of binding components, but this is not the case for the aforementioned example of Van Horn-Morris.
\end{remark}

\noindent{\bf Outline of the paper:} First, we review in some details the three  distinct constructions above due to A'Campo $\&$ Ishikawa, Giroux, and Birkhoff $\&$ Fried in  Sections~\ref{sec: ACampo} to~\ref{sec: Birkhoff}, respectively.  Then we describe a proof of Theorem~\ref{thm: main} in Section~\ref{sec: proof}. 
Finally we give a proof of Theorem~\ref{thm: genusone} in  Section~\ref{sec: genus}.

\section{Complexification of a Morse function on a surface}\label{sec: ACampo}

In this section, we briefly review Ishikawa's construction \cite{i} of Lefschetz fibrations  on the disk (co)tangent bundle of a closed and oriented  surface $\S$,  which is a generalization of the work of A'Campo \cite{aca, ac}.
These Lefschetz fibrations induce natural open books for the unit (co)tangent bundle by restricting to the boundary.

For any admissible divide $P \subset \S$, there is an ordered Morse function $f : \S \to \mathbb{R}$ adapted to $P$, satisfying the following conditions

$\bullet$ $P = f^{-1}(0)$,

$\bullet$ each double point of $P$ corresponds to a critical point of $f$ of index~$1$, 

$\bullet$ each black (resp. white) region of $\S \setminus P$ contains exactly one index~$2$ (resp.~$0$) critical point of $f$, \red{and}

$\bullet$  \red{$f$ does not have any other critical points.} 

\begin{figure}[h!]
	\includegraphics[width=.49\textwidth]{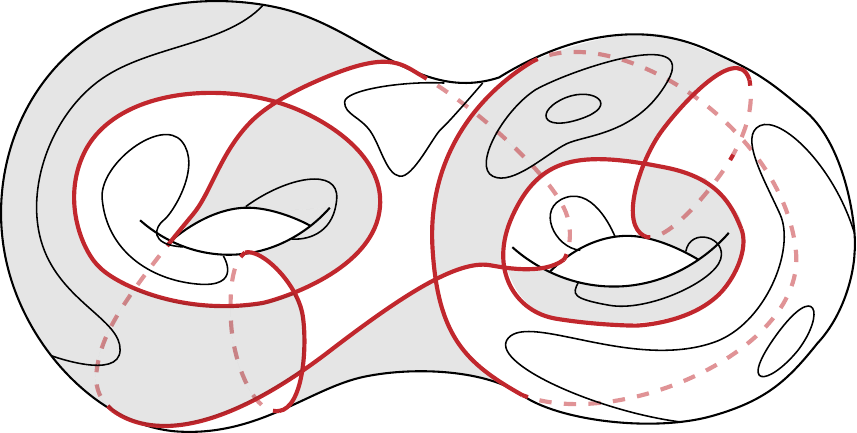}
	\caption{An admissible divide~$P$ (in red) on a genus 2 surface, a black-and-white coloring of the complement~$\S\setminus P$, and some level sets of an ordered Morse function adapted to~$P$. The divide $P$ consists of six curves and includes seven double points.}
	\label{F:Divide}
\end{figure}

Recall that by definition of a Morse function, at every critical point~$p$ of~$f$, one can find a small neighbourhood $U_p$ and local Morse coordinates on~$U_p$, that is, coordinates $(x_1, x_2)$ such that $f(x_1, x_2)=f(p)\pm x_1^2\pm x_2^2$. 
One can then pick a cut-off function $\chi: \S \to \R$ which vanishes outside the union of the~$U_p$ and is equal to $1$ on a smaller neighbourhood of the critical points.

The Morse function $f: \S \to \mathbb{R}$ can be extended to an \emph{almost complexified} Morse function $f_\C : T\S \to \mathbb{C}$ defined as
\begin{equation} \label{eq: map}
f_\C (x,u) = f (x) + i \eta df_x (u) - \dfrac{1}{2} \eta^2 \chi (x) d^2f_x (u,u)
\end{equation}
for $x \in \S$ and $u \in T_x\S$. Here the linear map $df_x :
T_x \S \to \R$ denotes the differential, and the bilinear map $d^2f_ x : T_x\S \times T_x\S \to \R$ denotes the Hessian of $f$ at the point $x$ in the local Morse coordinates.  Also $\eta$ is a sufficiently small positive real number, whose role will be clarified below.

The map $f_\C$ is almost complexified in the sense that the Morse singularities of~$f$ becomes complex Morse singularities of $f_\C$. More precisely, \red{setting $z_j=x_j+i \eta u_j$}, the Morse singularities of $f$ of the form $-x_1^2 -x_2^2, -x_1^2 +x_2^2, x_1^2 +x_2^2$ in local coordinates on $\S$ becomes  singularities of $f_{\C}$ of the form  $-z_1^2 -z_2^2, -z_1^2 +z_2^2, z_1^2 +z_2^2$, respectively, in local complex coordinates on $T^*\S$, which are all equivalent to the complex Morse singularity $z_1^2 + z_2^2$, under a complex change of coordinates. Moreover, by choosing $\eta$ sufficiently small, one can guarantee that there are no other critical points of $f_{\C}$. It follows that  a point in $T\S$  is a critical point of $f_{\C}$  if and only if it belongs to $\S$ and it is a critical point of $f$. The map   $f_{\C} :  T\S \to \mathbb{C}$ descents to an {\em achiral} (see Remark~\ref{rem: error}) Lefschetz fibration $D T\S \to D^2$.  The singular fiber over~$0 \in D^2$ contains a singularity for each double point of $P$.  Corresponding to each index $2$ or index $0$ critical point of $f$, there is a singular fiber  containing a unique singularity. Moreover,  the regular fiber and the vanishing cycles of the Lefschetz fibration $D T \S \to D^2$, are described explicitly in \cite{i} by a method due to A'Campo \cite{ac}.

\begin{remark}\label{rem: error} There is an orientation issue in \cite[Lemma 2.6]{i}. Following the notation in \cite{i}, suppose that $x_1, x_2$ are local coordinates on the surface $\S$ and $u_1, u_2$ are the corresponding coordinates in the {\em tangent} fibers, so that $\{x_1, x_2, u_1, u_2\}$ is an oriented chart for $T\S$. But the orientation of this chart is opposite to the complex orientation that is required in the definition of a Lefschetz fibration, since Ishikawa uses the complex chart  $$(z_1, z_2)= \red{(x_1+i\eta u_1, x_2+i\eta u_2)}$$ for $T\S$ in the proof of \cite[Lemma 2.6]{i}. Except the   orientation issue, however, everything else works in Lemma 2.6 and the subsequent discussion leading to Proposition 3.1 and Lemma 3.2 in \cite{i}.  In other words, the Lefschetz fibration Ishikawa constructs is {\em achiral} on $DT\S$ and the corresponding Lefschetz fibration $D T^* \S \to D^2$, obtained by reversing the orientation,  induces an open book on the boundary $S T^*\S$. It follows, by Theorem~\ref{thm: main}, that  this open book supports the contact canonical structure on $S T^*\S$, which is not clear from Ishikawa's work \cite{i}.  \end{remark}

\begin{figure}[h!]
	\begin{picture}(360,395)(0,0)
	\put(0,300){\includegraphics[width=.38\textwidth]{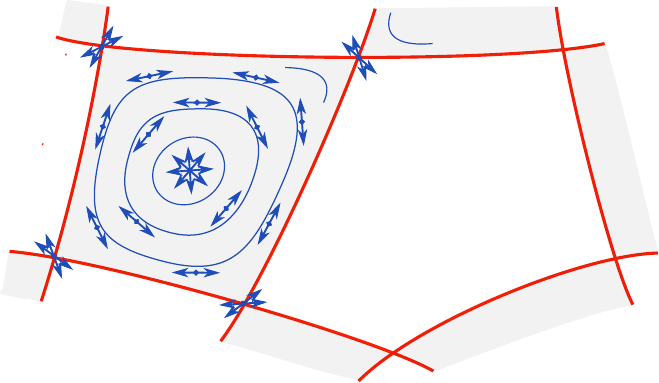}}
	\put(90,350){$S_0$}
	\put(180,300){\includegraphics[width=.38\textwidth]{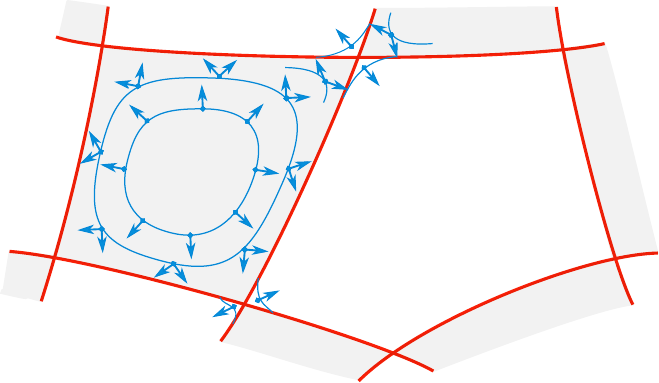}}	
	\put(270,350){$S_{\frac\pi4}$}
	\put(0,200){\includegraphics[width=.38\textwidth]{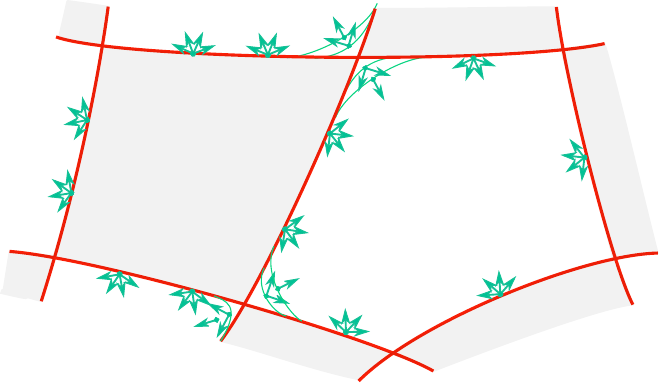}}
	\put(50,250){$S_{\frac\pi2}$}
	\put(180,200){\includegraphics[width=.38\textwidth]{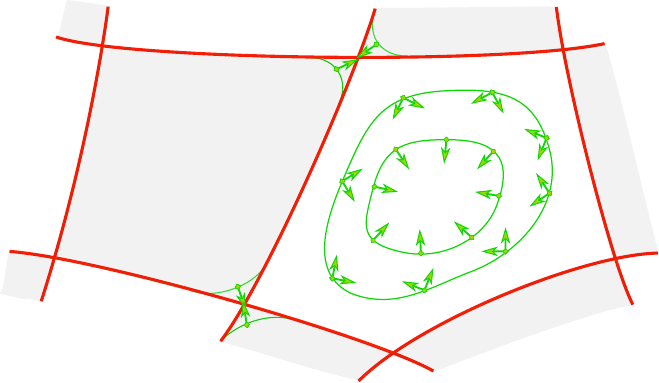}}
	\put(230,250){$S_{\frac{3\pi}4}$}
	\put(0,100){\includegraphics[width=.38\textwidth]{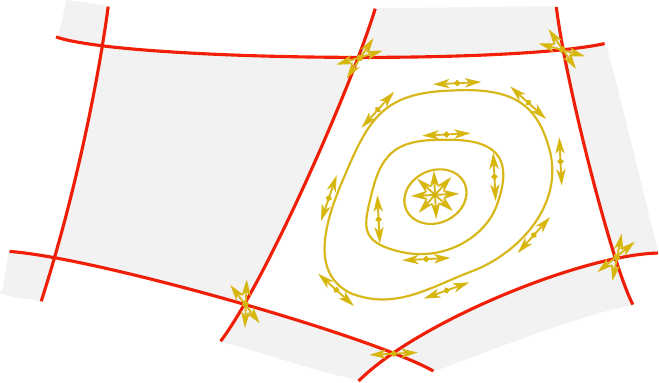}}
	\put(50,150){$S_\pi$}
	\put(180,100){\includegraphics[width=.38\textwidth]{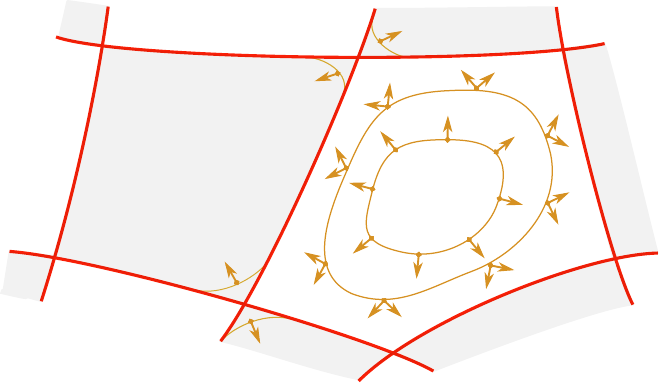}}
	\put(230,150){$S_{\frac{5\pi}4}$}
	\put(0,0){\includegraphics[width=.38\textwidth]{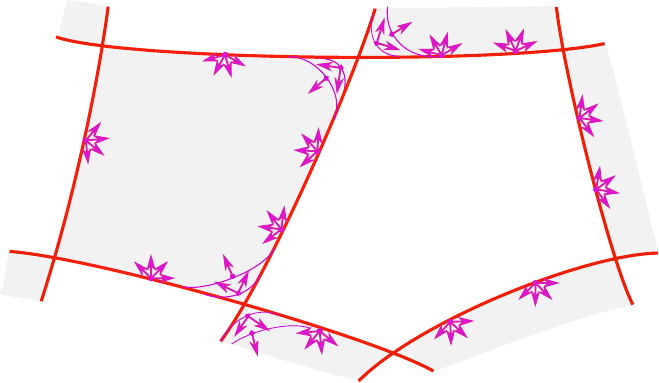}}
	\put(90,50){$S_{\frac{3\pi}2}$}
	\put(180,0){\includegraphics[width=.38\textwidth]{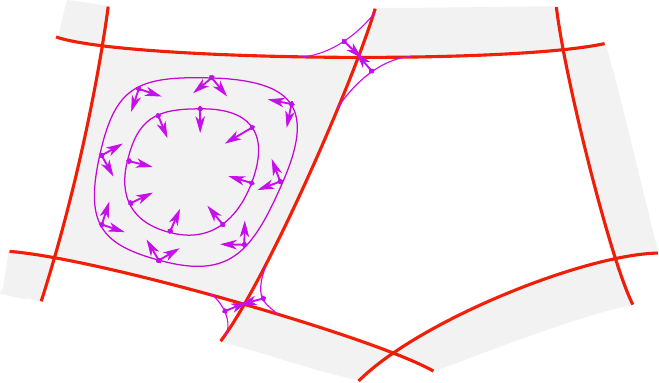}}
	\put(270,50){$S_{\frac{7\pi}4}$}
	\end{picture}
	\caption{Some of the pages  $S_\theta$ of the A'Campo-Ishikawa open book for~$ST\S$, viewed as sets of vectors tangent to~$\S$.}
	\label{F:ACampo1}
\end{figure}

When restricted to $ST\S$, the complexification map $f_\C$ in~\eqref{eq: map} vanishes on the link $L(P)$ of the divide $P$, which is the set of unit vectors tangent to $P$ \cite{ac, i}.

\begin{theorem}[A'Campo-Ishikawa]\label{thm: ACampo}
\red{Provided that $\eta$ is sufficiently small, the map $f_\C$ restricts to a fibration $ST\S \setminus L(P) \to S^1$, obtained by dividing the image by its norm, and therefore inducing an open book for  $ST\S$ with binding $L(P)$.}
\end{theorem}

This is the \emph{A'Campo-Ishikawa}  open book mentioned in Theorem~\ref{thm: main}. If the unit circle $S^1$ is parametrized by $\theta$, then we denote the closure of the inverse image of $\theta$ under the fibration map as $S_\t$, the $\theta$-page.

As depicted in Figure~\ref{F:ACampo1}, the interior of $S_0$ (resp. $S_\pi$) consists of the unit vectors tangent to the level sets of $f$ in black (resp. white) regions, while the interior of~$S_{\pi/2}$ (resp. $S_{3\pi/2}$) consists of the unit vectors along the divide $P$ pointing into black (resp.  white) regions of $\S \setminus P$, except around the double points of the divide where the term involving the Hessian in Equation~\eqref{eq: map} smooths the surface and separate the surfaces associated to different values of~$\theta$.
Around the double points of~$P$, where the function~$\chi$ does not vanish, the surfaces~$S_\theta$ are depicted in Figure~\ref{F:ACampo2}.

\begin{figure}[h!]
	\includegraphics[width=.22\textwidth]{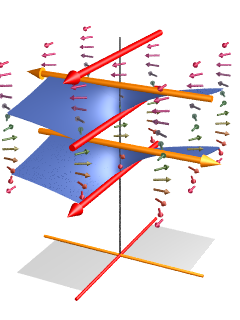}
	\includegraphics[width=.22\textwidth]{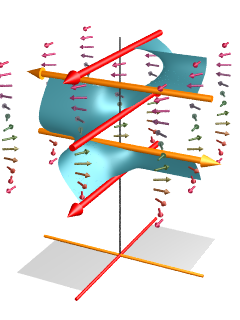}
	\includegraphics[width=.22\textwidth]{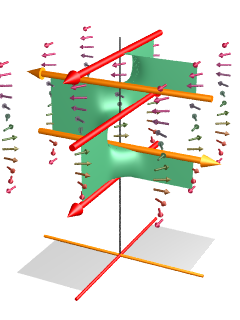}	
	\includegraphics[width=.22\textwidth]{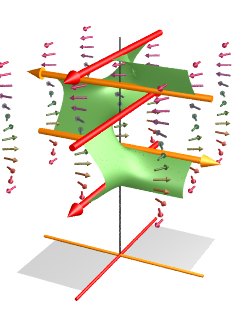}
	
	\includegraphics[width=.22\textwidth]{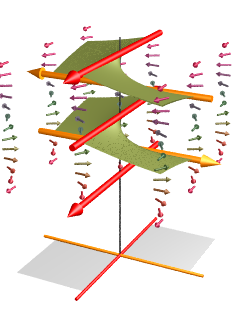}
	\includegraphics[width=.22\textwidth]{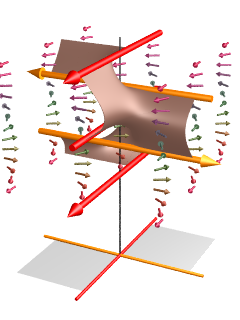}
	\includegraphics[width=.22\textwidth]{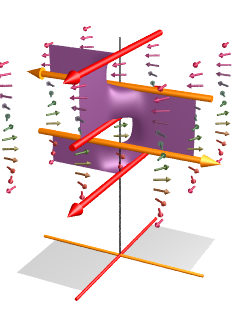}
	\includegraphics[width=.22\textwidth]{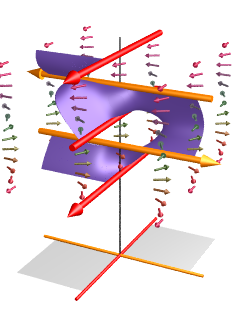}
	\caption{The pages $S_\theta$ of the A'Campo-Ishikawa  open book for~$ST\S$ around the fiber of a double point of a divide~$P$, for $\theta=\frac{k\pi}4$ with $k=0, \dots, 7$. \red{The arrows correspond to the generators of the geodesic flow on~$ST\S$, in the case where the divide~$P$ is geodesic. They are tangent to the link~$L(P)$ and transverse to the interiors of all pages.} }
	\label{F:ACampo2}
\end{figure}

The topological type of the pages of the A'Campo-Ishikawa  open book can be easily determined as follows. We denote by $v(P)$ the number of double points of a divide~$P$ and by~$c(P)$ the number of circles that form~$P$. In Figure~\ref{F:Divide}, for example, \red{$c(p)=6$ and $v(p)=7$. }

\begin{proposition}\label{prop: genus}
For any admissible divide~$P$, the A'Campo-Ishikawa open book has~$2c(P)$ binding components and its page genus is given by~$1+v(P)-c(P)$.
\end{proposition}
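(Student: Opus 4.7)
The plan is to establish the two assertions separately: first count the binding components of $L(P)$ directly from the divide combinatorics, and then extract the page genus from an Euler-characteristic argument applied to the (achiral) Lefschetz fibration $\pi \colon DT\S \to D^2$ of Remark~\ref{rem: error}. For the binding count, I would observe that $L(P) \subset ST\S$ consists of all unit tangent vectors to $P$; each of the $c(P)$ immersed circles composing $P$ lifts to $ST\S$ via its unit tangent vector, and there are exactly two such lifts per component, one per orientation. Genericity of the divide forces the two strands crossing at any double point to be transverse, so the four unit tangent vectors arising at a double point belong to four distinct lift circles; genericity likewise rules out any self-coincidence of a single lift. Hence the lifts are pairwise disjoint embedded circles, and $L(P)$ has exactly $2c(P)$ components.

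For the page genus $g$, I would assemble the ingredients for the formula $\chi(F) = 2 - 2g - 2c(P)$, where $F$ denotes the page of the induced open book, equivalently the regular fiber of $\pi$. Let $b$ and $w$ denote the numbers of black and white regions of $\S \setminus P$. By the adaptation hypothesis, $f$ has exactly $v(P)$ index-$1$ critical points, $b$ index-$2$ critical points, and $w$ index-$0$ critical points, whence $\chi(\S) = w - v(P) + b$, and $\pi$ has $n := v(P) + b + w$ nodal singular fibers. Since collapsing a vanishing cycle raises the Euler characteristic of a fiber by $1$, decomposing $DT\S$ into the generic part over $D^2 \setminus \{\text{critical values}\}$ and the $n$ nodal fibers yields $\chi(DT\S) = \chi(F) + n$. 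Combining this with $\chi(DT\S) = \chi(\S)$ from the retraction of the disk bundle onto its zero section gives
\[
\chi(F) \;=\; \chi(\S) - n \;=\; (w - v(P) + b) - (v(P) + b + w) \;=\; -2 v(P).
\]
Solving $2 - 2g - 2c(P) = -2v(P)$ then yields $g = 1 + v(P) - c(P)$, as claimed.

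I do not expect any step here to be a serious obstacle: the substantial work has already gone into producing the A'Campo--Ishikawa Lefschetz fibration and identifying the induced open book. The only delicate point is the binding count, where genericity of $P$ is essential to exclude coincidences among tangent directions at a crossing; the Euler-characteristic calculation is then purely combinatorial, with the adaptation hypothesis entering through the bijection between critical points of $f$ and nodal fibers of $\pi$ and through the two independent computations of $\chi(\S)$.
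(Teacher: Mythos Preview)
Your proof is correct, but it follows a genuinely different route from the paper's. The paper computes the Euler characteristic of the page directly and combinatorially: it picks the specific page $S_{\pi/2}$, views the divide $P$ as a $4$-valent graph with $v(P)$ vertices and $2v(P)$ edges, observes that $S_{\pi/2}$ is assembled from one rectangle per edge (glued along vertical half-sides according to the cyclic order at each vertex), and checks that each rectangle contributes $-1$ to $\chi$, giving $\chi(S_{\pi/2}) = -2v(P)$. Your argument instead extracts $\chi(F)$ globally from the Lefschetz fibration via $\chi(DT\S) = \chi(F) + n$ and the retraction $\chi(DT\S) = \chi(\S)$, feeding in the Morse-theoretic count of critical points. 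The advantage of your approach is that it is purely formal once the Lefschetz fibration and its critical points are in hand, and it avoids any direct inspection of a page; the paper's approach, by contrast, is more hands-on and yields an explicit cell decomposition of $S_{\pi/2}$, which is useful elsewhere in the paper when comparing pages across the different open-book constructions. One small wording issue: you write ``$n$ nodal singular fibers'', but in Ishikawa's fibration all $v(P)$ index-$1$ singularities sit over the single critical value $0$, so there are $1 + b + w$ singular \emph{fibers} carrying $n = v(P) + b + w$ \emph{nodes} in total; your formula $\chi(DT\S) = \chi(F) + n$ is correct with $n$ the total node count, so the computation goes through unchanged.
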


\begin{proof} Since all the pages have the same genus, we focus on one page, say~$S_{\frac\pi2}$, in particular. 
First notice the boundary of~$S_{\frac\pi2}$ is the link~$L(P)$ of the divide, which has two components for every circle in~$P$. Now consider $P\subset\S$ as a graph whose vertices are the double points of~$P$ and whose edges are the segments of~$P$ connecting these double points.
Then the surface $S_{\frac\pi2} \subset ST\S$ is made of one rectangle in the fiber of every segment of~$P$.
The horizontal sides of these rectangles are in the link~$L(P)$, hence in the boundary of~$S_\frac{\pi}2$.
The vertical sides can be divided into two segments, which are glued with the rectangle associated to the next and previous edge of~$P$ respectively, when rotating around the corresponding vertex of~$P$, as depicted in Figure~\ref{F:Rectangles}.
One checks that every rectangle contributes by~$-1$ to the Euler characteristics of~$S_{\frac\pi2}$.
Since $P$ has twice more edges than vertices, we deduce that $\chi(S_{\frac\pi2})=-2v(P)$.
The number of boundary components of~$S_{\frac\pi2}$ is $2c(P)$.
Hence the genus of~$S_{\frac\pi2}$ is $1+v(P)-c(P)$.
\end{proof}

\begin{figure}[h!]
	\includegraphics[width=.3\textwidth]{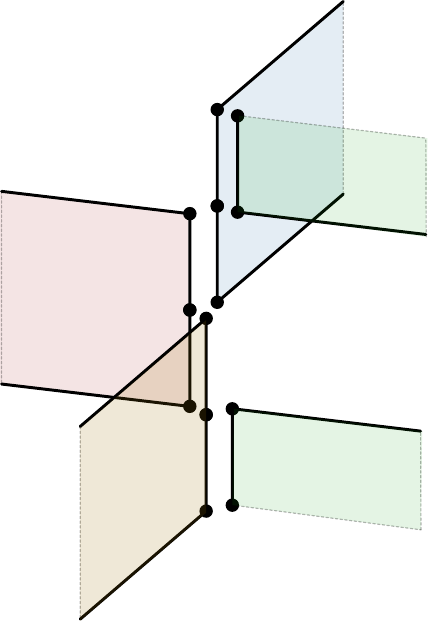} 
    	\caption{A decomposition of the page~$S_{\frac\pi2}$ in the A'Campo-Ishikawa's open book into rectangles, each of which corresponds to one edge of the divide. The figure shows the four rectangles adjacent to the fiber of a double point of the divide. Each rectangle consists of one face, 6 edges, and 6 vertices. The horizontal edges (2 per rectangle) correspond to the link of the divide and form the boundary of~$S_{\frac\pi2}$. The vertical edges (4 per rectangle) are glued 2 by 2. The vertices are glued 3 by 3. Hence the contribution of each rectangle to the Euler characteristics of~$S_{\frac\pi2}$ is $1-2-4\cdot\frac12+6\cdot\frac13=-1$.}
	\label{F:Rectangles}
\end{figure}

\section{Convex Morse functions on the bundle of hyperplanes} \label{sec: giroux}

Our goal in this section is to review Giroux's construction~\cite{g} of an open book adapted to the contact $3$-manifold~$(V(\S), \xi)$, where $\xi$ is the canonical contact structure on the bundle~$V(\S)$ of cooriented lines tangent to $\S$.
We begin with some general facts about convex contact manifolds, and then give some details of Giroux's proof of the convexity of the canonical contact structure on the bundle of cooriented hyperplanes~$V(M)$ for an arbitrary smooth manifold~$M^{n+1}$ in Section~\ref{sec: conv}, before we turn our attention to the case $n=1$ (i.e, $M=\S$, a surface) in Section~\ref{sec: cano3}.

\begin{definition} \cite{g} A contact structure $\zeta$ is called {\em convex}  if it is invariant under the flow of a vector field $X$ which is gradient-like for a Morse function.  Such a function is called $\zeta$-convex (or {\em contact}) Morse, and $X$ is called a {\em contact} vector field.  The {\em characteristic hypersurface} $C_X$ is defined as the set of points where $X$ is tangent to  $\zeta$.
 \end{definition}

In his groundbreaking work, Giroux \cite{gi} proved that every contact manifold admits an adapted open book. This result follows as a corollary of  a much harder theorem of Giroux which says that every contact structure is convex, a proof of which can also be found in Sackel's Ph.D. thesis \cite{sa}.  
We formulated the two celebrated results of Giroux as Theorem~\ref{thm: gir} below.

\begin{theorem}[Giroux]\label{thm: gir} Let $(V, \zeta) $ be a closed contact manifold of dimension $2n+1$. 
Suppose that $X$ is a contact vector  field on $V$, which is gradient-like for an ordered Morse function $F: V \to \R$. 
Let $L$ be a regular level set of $F$ above the critical values of index $n$ and below the critical values of index $n+1$. 
Then there is an open book adapted to $(V, \zeta) $ whose binding $K$ is the transverse intersection of  $L$ and the characteristic hypersurface~$C_X$. 
Moreover, the binding $K= L \cap C_X$ cuts $L \cup C_X$ into four pages of this open book.

Conversely, any open book for $V$ supporting $\xi$ comes from this construction.
\end{theorem}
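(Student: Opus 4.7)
The plan is to realize the open book concretely through the contact Hamiltonian of $X$. Fix a contact form $\alpha$ with $\ker\alpha=\zeta$ and set $H=\alpha(X)$, so that $C_X=H^{-1}(0)$. Writing $\mathcal{L}_X\alpha=\mu\alpha$ for the unique function $\mu$ determined by $X$ being contact, Cartan's formula yields $dH=\mu\alpha-\iota_X d\alpha$, and pairing with the Reeb field $R_\alpha$ gives $dH(R_\alpha)=\mu$. Two consequences follow at once: $C_X$ is invariant under the flow of $X$ (since $\mathcal{L}_X H=\mu H$), and $dH$ does not vanish on $C_X$ away from the zero set of $X$, which is the critical set of $F$. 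Thus $C_X$ is a smooth embedded hypersurface in a neighborhood of $L$.

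Let $c=F|_L$. At a point $p\in K:=L\cap C_X$ one has $X_p\neq 0$, $dF_p(X_p)>0$ by gradient-likeness, and $dH_p(X_p)=\mu H=0$, so $dF$ and $dH$ are linearly independent and $K$ is a smooth codimension-two submanifold. I would then define
\[
\Phi\colon V\setminus K\longrightarrow S^1,\qquad \Phi(p)=\arg\bigl((F(p)-c)+iH(p)\bigr),
\]
and observe that $\Phi^{-1}(0)$ and $\Phi^{-1}(\pi)$ are the two components of $C_X\setminus K$ separated by $L$, while $\Phi^{-1}(\pi/2)$ and $\Phi^{-1}(3\pi/2)$ are the two components of $L\setminus K$ separated by $C_X$; these four half-hypersurfaces are exactly the four pieces into which $K$ cuts $L\cup C_X$. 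Promoting $\Phi$ to a genuine open book amounts to showing that it is a locally trivial fibration on $V\setminus K$ and has the standard angular normal form on a tubular neighborhood of $K$; the key technical input is the flow of $X$, which interpolates between the four reference pages and provides the product structure near $K$.

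To check that the resulting open book supports $\zeta$, I would verify two conditions. First, $\alpha|_K$ must be a contact form on $K$; this follows because $R_\alpha$ is transverse to $C_X$ (using $dH(R_\alpha)=\mu\neq 0$) and can be arranged transverse to $L$ as well, forcing $R_\alpha$ to be tangent to $K$ with $\alpha(R_\alpha)=1$. Second, $d\alpha$ must restrict to a symplectic form on each page; this is immediate on the four reference half-pages by transversality of $R_\alpha$, and the identity $\mathcal{L}_X(d\alpha)=d\mu\wedge\alpha+\mu\, d\alpha$ shows the flow of $X$ sends symplectic pages to conformally symplectic pages, propagating the property to every $\Phi^{-1}(\theta)$. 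For the converse I would start with a supporting open book $\pi\colon V\setminus B\to S^1$ with page $\Sigma$, choose a compatible contact form, define $F$ and $H$ as the real and imaginary parts of a complex smoothing of $\pi\cdot\rho$ for a radial function $\rho$ on $\Sigma$, and take $X$ to be the contact vector field with Hamiltonian $H$; gradient-likeness of $X$ with respect to $F$ then follows by construction. The main obstacle I expect is the global verification of the angular normal form near $K$ and of the supporting conditions across all pages, which requires combining the Morse theory of $F$ with the invariance of $C_X$ under the flow of $X$; this is exactly where the hypothesis that $L$ lies strictly between the index-$n$ and index-$(n{+}1)$ critical values becomes essential.
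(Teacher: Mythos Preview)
The paper does not prove this theorem; it is quoted from Giroux's work (references~\cite{g} and~\cite{gi}) and used as a black box in Section~4. There is therefore no proof in the paper to compare your attempt against.

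Assessing your sketch on its own merits: the basic setup---$H=\alpha(X)$, the identity $dH=\mu\alpha-\iota_X d\alpha$, the $X$-invariance of $C_X$ via $\mathcal{L}_X H=\mu H$, and the candidate angle $\Phi=\arg\bigl((F-c)+iH\bigr)$---is correct and isolates the right objects. But there is a genuine logical error in your check that $\alpha|_K$ is contact: you assert that $R_\alpha$ is transverse to $C_X$ and can be arranged transverse to $L$, and then conclude this ``forc[es] $R_\alpha$ to be tangent to $K$''. That implication is backwards; tangency to $K=C_X\cap L$ would force tangency to both hypersurfaces, directly contradicting the transversality you just claimed. What is actually required is that $TK\not\subset\zeta$ and that $d\alpha$ be nondegenerate on $TK\cap\zeta$, and neither follows from your argument. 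There is also a real gap in promoting $\Phi$ to a fibration: the critical set of $\Phi$ is $\{(F-c)\,dH=H\,dF\}$, which you have not shown to be empty on $V\setminus K$, and the flow of $X$ does not interpolate between the $L$-pages and the $C_X$-pages in the way you suggest, since it preserves $C_X$ but is everywhere transverse to the level sets of $F$. Giroux's actual construction proceeds more structurally, building the open book from the handlebody decomposition that the ordered Morse function and the $X$-flow provide, rather than from an explicit global angle map.
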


\subsection{Convexity of the canonical contact structure on the bundle of hyperplanes } \label{sec: conv}

Let $V(M) $ denote the  bundle of cooriented hyperplanes tangent to a closed manifold $M^{n+1}$ and $\xi$ denote the canonical contact structure on $V(M)$. Note that $V(M)$ can be identified with the unit cotangent bundle $S T^*M$  and $\xi$ is given  by the kernel of the Liouville $1$-form $\l$, under this identification (see, for example, \cite[page 32]{ge}).   In the following, we review Giroux's construction \cite[Example 4.8]{g} of a $\xi$-convex Morse function on $V(M)$.

Any diffeomorphism $\psi: M \to M$ lifts to a contactomorphism $\widetilde{\psi}$ of $(V(M), \xi)$ defined by $$\widetilde{\psi} (x, H) = (\psi (x), d\psi_x (H)).$$
It follows that any vector field $X$ on $M$ lifts to a contact vector field $\widehat{X}$  on $(V(M), \xi)$ by lifting the flow of $X$.

Let $f: M \to \R$ be a Morse function and $X$ be a gradient-like vector field for $f$ such that at each critical point of $p$ of $f$, the eigenvalues of $D_pX$ are {\em real} and {\em simple}.
Then there is a contact vector field $\widehat{X}$ for $(V(M), \xi)$ obtained as the lift of $X$ as explained above.
To show that $(V(M), \xi)$ is convex, it suffices to construct a Morse function $f_\xi : V(M) \to \R$ such that  $\widehat{X}$ is gradient-like for $f_\xi$. 
For any critical point~$p$ of $f$, let $V_p (M) := \pi^{-1} (p) $ denote the sphere fiber, where $\pi : V(M) \to M$ is the bundle projection.  Since $\widehat{X}$ projects to $X$, it is vertical above the critical points of $f$, i.e., it is tangent to $V_p$. The restriction of $\widehat{X}$ to $V_p(M)$ is the gradient of some Morse function $g_p : V_p(M) \to \R$, having exactly $2n+2$ critical points, corresponding to the $n+1$ hyperplanes generated by~$n$ eigendirections of~$D_pX$.

We set
\begin{equation}\label{eq: fxi}
f_\xi := f + \sum\limits_{p \in Crit(f)} \chi_p g_p,
\end{equation}
where $\chi_p$ is a suitably defined cut-off function which vanishes outside of a sufficiently small neighborhood of the critical points.

\begin{figure}[h!]
	\includegraphics[width=.42\textwidth]{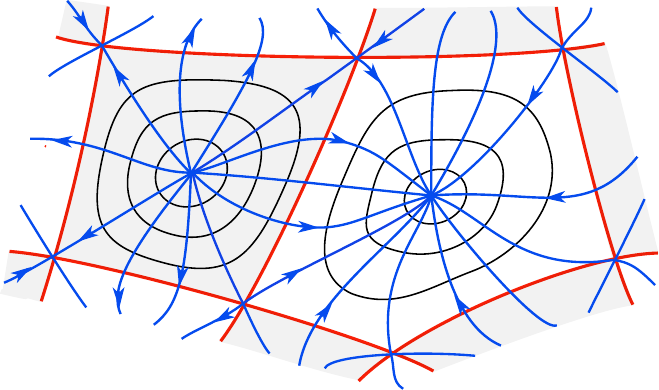}
	\caption{Flow lines of a gradient-like vector field for an ordered Morse function associated to a divide on a surface.}
	\label{F:DivideConvexe}
\end{figure}

\subsection{Canonical contact structure on the bundle of cooriented lines} \label{sec: cano3}
We restrict our attention to the case $n=1$, namely we take $M$ to be a closed and oriented  surface $\S$. Suppose that~$f: \S \to \R$  is an ordered Morse function and~$X$ is  a gradient-like vector field for $f$ on~$\S$ (see Figure~\ref{F:DivideConvexe} for an example).  The vector field $X$ lifts to a vector field~$\widehat X$ on~$V(\S)$, which is gradient-like for the modified lift~$f_\xi:V(\S)\to\R$. As observed by Massot \cite{m}, the $\xi$-convex Morse function $f_\xi : V(\S) \to \R$ defined by Equation~\eqref{eq: fxi} is {\em ordered} provided that $f: \S \to \mathbb{R}$ is an {\em ordered}  Morse function\footnote{Massot \cite{m} uses a self-indexed Morse function $f$, but the conclusion holds true if self-indexed is replaced by ordered. Here we assume that $f$ is ordered and $f^{-1}(0)$  includes all index $1$ critical points.}.
In particular the singularities of~$f$ lift to quadruples of singularities of~$f_\xi$ in~$V(\S)$, as for example,  we depicted on the left in  Figure~\ref{F:CriticalPoint} for an index~$1$ singularity that lifts to two index~$1$ and two index ~$2$ singularities.

\begin{figure}[h!]
	\begin{picture}(420,200)(0,0)
	\put(0,-15){	\includegraphics[width=.35\textwidth]{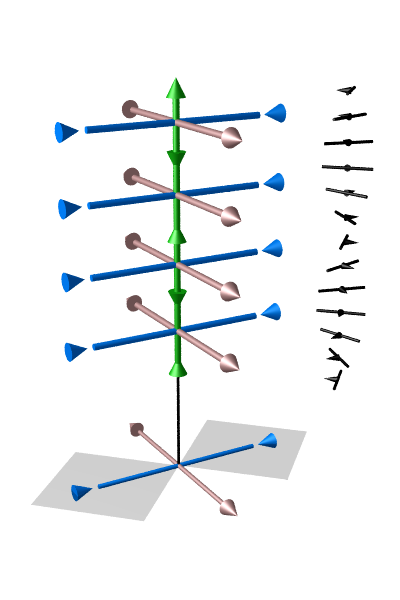}}
	\put(140,-15){	\includegraphics[width=.35\textwidth]{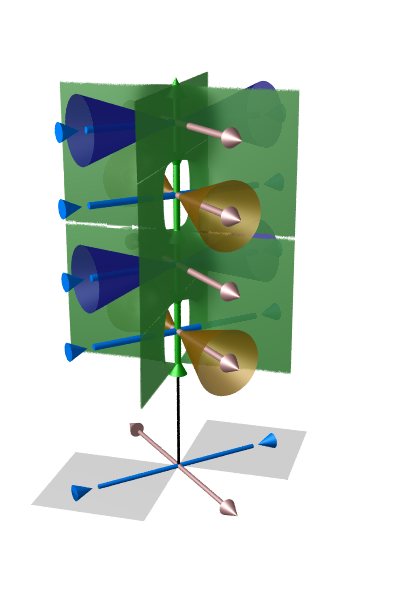}}
	\put(275,-15){	\includegraphics[width=.35\textwidth]{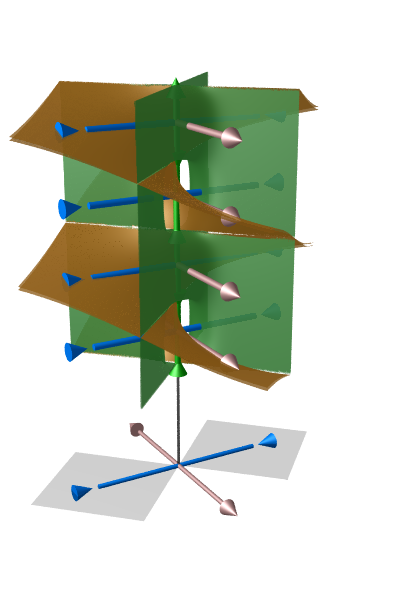}}
	\end{picture}
	\caption{Given a vector field~$X$ on a surface~$\S$ and a hyperbolic fixed point~$p$, the differential of the flow shrinks tangent lines toward the expanding eigendirection. The lift~$\widehat{X}$ in the bundle~$V(\Sigma)$ has four critical points in the fiber above~$p$, which correspond to the cooriented lines tangent to the eigendirections as depicted on the left.
	When $X$ is a (pseudo-) gradient of an ordered Morse function~$f$ on~$\S$, one can lift~$f$ to~$V(\S)$ and modify it using Equation~\eqref{eq: fxi} into an ordered Morse function~$f_\xi$, so that the critical points of $f_\xi$ are the critical points of $\widehat{X}$ and the levels of $f_\xi$ away from the fibers of the critical points of~$f$ are the lifts of the levels of~$f$. In the center we depicted two critical level sets (blue and yellow) and the intermediate regular level set (green)~$f_\xi^{-1}(0)$. On the right, we depicted the level set~$f_\xi^{-1}(0)$ (green) and the characteristic surface~$C_{\widehat X}$ (yellow). \red{The gray squares at the bottom represent the black components of the complement of the divide.} } 
	\label{F:CriticalPoint}
\end{figure}

Now one can apply Theorem~\ref{thm: gir} to find an open book, which we call a \emph{Giroux}  open book,  that supports the canonical contact structure $\xi$ on $V(\S)$ using the $\xi$-convex ordered Morse function $f_\xi : V(\S) \to \R$.
On the right in Figure~\ref{F:CriticalPoint}, we depicted  the characteristic surface~$C_{\widehat X}$ (yellow) and the regular level~$f_\xi^{-1}(0)$ (green) around the fiber of a double point of the divide.
They intersect along the link consisting of the cooriented lines based at~$P$ and tangent to~\red{$X$.}  We provide some more details of Giroux  open book in the proof of Lemma~\ref{lem: bin}. 

Suppose that $P$ is an admissible divide on $\S$ and let $f: \S \to \R$ be an ordered Morse function adapted to $P$.
Recall that~$c(P)$ is the number of circles in~$P$ and~$v(P)$ is the number of double points of~$P$.
By our assumption, $v(P)$ is equal to the number of index~$1$ critical points of $f$.

\begin{proposition}\cite{m}\label{prop: girgenus}
For any admissible divide~$P$, the Giroux open book has~$2c(P)$ binding components and its page genus is given by~$1+v(P)-c(P)$.
\end{proposition}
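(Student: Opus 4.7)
The plan is to apply Theorem~\ref{thm: gir} to the $\xi$-convex ordered Morse function $F = f_\xi$ together with the contact vector field $\widehat X$. Since $f_\xi$ is ordered, one may choose a regular value $c$ above all index-$1$ critical values of $f_\xi$ and below all index-$2$ ones; setting $L = f_\xi^{-1}(c)$, the binding $K = L \cap C_{\widehat X}$ separates $L$ into two copies of the page~$S$, so $\chi(S) = \tfrac12 \chi(L)$. It therefore suffices to compute $\chi(L)$ and to count the components of $K$.

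For the Euler characteristic of $L$, I would count critical points of $f_\xi$ lying in the sublevel $V_- = f_\xi^{-1}((-\infty,c])$. Using the quadruple description around Equation~\eqref{eq: fxi}, every index-$0$ critical point of $f$ contributes two index-$0$ and two index-$1$ critical points of $f_\xi$ to $V_-$, every index-$1$ critical point of $f$ contributes two index-$1$ critical points (its two index-$2$ partners lie above $c$), and the index-$2$ critical points of $f$ contribute nothing. Writing $n_k$ for the number of index-$k$ critical points of $f$ and using $n_1 = v(P)$, Morse theory yields
\[
\chi(V_-) \;=\; 2n_0 - (2n_0 + 2n_1) \;=\; -2v(P),
\]
and the identity $\chi(V_-) = \tfrac12 \chi(\partial V_-)$, valid for any compact odd-dimensional manifold, then gives $\chi(L) = -4v(P)$ and hence $\chi(S) = -2v(P)$.

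For the binding components, I would analyze the projection $\pi(K) \subset \S$. Outside the fibers of critical points of $f$, one has $f_\xi = f$ and $\widehat X$ projects to $X$, so there $L = \pi^{-1}(f^{-1}(c))$ and $C_{\widehat X}$ consists of the two cooriented lines at each $q$ containing $X(q)$; since $X$ is transverse to the regular level $f^{-1}(c)$, the intersection $K$ is a smooth double cover of $f^{-1}(c)$ on this open set. Taking $c$ slightly above~$0$ makes $f^{-1}(c) \subset \S$ the standard smoothing of the divide $P$ separating black regions from white.

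The main technical step, which I expect to be the principal obstacle, is the local analysis at the fiber over a double point $p$ of $P$, illustrated on the right of Figure~\ref{F:CriticalPoint}. Using model coordinates $f = y^2 - x^2$ and the corresponding $\widehat X$, one checks that $L \cap V_p$ consists of four points, one in each arc of $V_p$ between consecutive critical points of $g_p$, and that the eight arcs of $K$ incident to $V_p$ from the four edges of $P$ at~$p$ pair up smoothly through $V_p$ so as to respect both the branch structure of $P$ and the coorientation sheet of $C_{\widehat X}$: the two arcs lying on consecutive edges of a single branch of $P$ and on a single sheet of $C_{\widehat X}$ join into one smooth arc through $V_p$, rather than switching branches or sheets. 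Granting this matching, each of the $c(P)$ immersed circles of $P$ lifts to exactly two closed components of $K$ (one per coorientation sheet), so $|K| = 2c(P)$. Combining with $\chi(S) = 2 - 2g - |K|$ gives the asserted page genus $g = 1 + v(P) - c(P)$.
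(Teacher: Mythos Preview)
Your proof is correct and follows a genuinely different route from the paper's. For the Euler characteristic, the paper works on the other pair of pages, namely the characteristic surface $C_{\widehat X}$: it uses that $f_\xi|_{C_{\widehat X}}$ is Morse with the same critical set as $f_\xi$, with index unchanged for $i\le 1$ and lowered by $1$ for $i\ge 2$, and reads off $\chi(C_{\widehat X}) = -4v(P)$ directly from that. Your computation via $\chi(V_-)$ and the odd-dimensional boundary identity $\chi(\partial V_-) = 2\chi(V_-)$ is just as short and avoids invoking that index-shift lemma for $C_{\widehat X}$; either approach gives $\chi(S)=-2v(P)$.

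For the binding count, the paper's proof of this proposition simply asserts that $|K|$ equals twice the number of circles in $P$; the actual verification is the local computation carried out later in Lemma~\ref{lem: bin}, which is precisely the ``main technical step'' you flag. The matching you predict---same branch of $P$, same sheet of $C_{\widehat X}$---is exactly what that lemma establishes. One simplification worth making: take $c=0$ rather than $c$ slightly positive. This is still a regular value of $f_\xi$, and away from the double points $L$ is then literally $\pi^{-1}(P)$, so $K$ is visibly two normal lifts of each immersed circle of $P$; in the model chart of Equation~\eqref{eq: mas} one finds $K$ to be four straight segments at constant $\theta\in\{\pi/4,3\pi/4,5\pi/4,7\pi/4\}$, two over each branch $x_0=\pm x_1$. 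This makes the branch-following behaviour immediate and sidesteps the apparent tension between the smoothing $f^{-1}(c)$ (which, for $c>0$, switches branches at the vertex) and the actual behaviour of $K$ inside the support of $\chi_p$.
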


\begin{proof} The indices of the critical points of $f_\xi: V(\S) \to \R$  can be computed from the indices of the critical points of $f: \S \to \R$, which in turn, determines the topology of the characteristic surface $C_{\widehat{X}}$.
This is because $f_\xi |_{C_{\widehat{X}}}$ is a  Morse function whose critical points are exactly the critical points of $f_\xi$ and moreover an index $i$ critical point of~$f_\xi$ gives a critical point of $f_\xi|_{C_{\widehat{X}}}$, whose index is $i$ if $i \leq 1$, and $i-1$, otherwise (see \cite{g} for this index calculation). Since~$C_{\widehat{X}}$ is the union of the two pages of an open book supporting $\xi$, the topology of the  page is determined by the number of  connected components of the binding, which is equal to twice the number of circles in  $P= f^{-1}(0)$. Since $v(P)$ is equal to the number of critical points of index one of the Morse function~$f$, by our assumption, the Euler characteristic of the page is given by $-2v(P)$, and thus the genus of the page is given by $1+v(P)-c(P)$. \end{proof}

{It follows that the topology of the page of the Giroux open book agrees with that of the A'Campo-Ishikawa open book (see Proposition~\ref{prop: genus}). }

\section{Geodesic flow and Birkhoff sections} \label{sec: Birkhoff}

In this section we recall Birkhoff's classical construction~\cite{b} of a negative Birkhoff cross section for the geodesic flow from a {convex divide (which he called a primary set of curves) on a surface.}  The geodesic flow $\Phi$ is the flow  on $ST\S$, whose orbits are the lifts of the geodesics on $\S$. More precisely, if  $g$ is  a geodesic with unit speed on the Riemann surface~$\S$, then  the orbit of $\Phi$ going through $(g(0), \dot{g} (0))$
is given by $$\Phi^t (g(0), \dot{g}(0)) = (g(t), \dot{g}(t)).$$ Although the geodesic flow depends on the choice of a Riemannian metric on~$\S$, Gromov \cite{gr} showed  that the geodesic flows corresponding to two negatively
curved metrics on a surface are  topologically conjugated, which means that there is a homeomorphism
between the unit tangent bundles of $\S$ with respect to these metrics, taking the oriented orbits of one onto the other.  We conclude that  as far as the topological (rather than dynamical) properties are concerned, there is essentially a unique geodesic flow on a negatively curved surface.

\begin{definition}  Suppose that $X$ is a non-singular vector field on a closed and oriented $3$-manifold $M$.
A {\em Birkhoff cross section}\footnote{also called Poincar\'e-Birkhoff section, or global cross section}  for $(M, X)$  is a compact orientable surface $S$ with boundary such that $S$ is embedded in $M$, $X$  is transverse to the interior of $S$, the boundary $\partial S$ is tangent to $X$, and every orbit of $X$ intersects $S$ after a bounded time  (see Figure~\ref{F:BirkhoffSection} left). \end{definition}

It follows that  $\partial S$ is the union of finitely many periodic orbits
of $X$, and  near its boundary, a Birkhoff cross section $S$ looks like a {\em helicoidal staircase} (see Figure~\ref{F:BirkhoffSection} right).
Notice that~$S$ is cooriented by $X$ since the interior of $S$ is transverse to $X$.
Therefore, there is an induced orientation on $S$ (and on its boundary $\partial S$), since $M$ is oriented.  On
the other hand,  $\partial S$ has a natural orientation as a collection of periodic orbits of the vector field $X$.
 A  Birkhoff cross section $S$ is said to be
a {\em positive} (resp. {\em negative}) if for each component of $\partial S$, the natural orientation given by $X$ coincides with (resp. is opposite of)  its orientation inherited as the boundary of $S$.

\begin{figure}[h!]
	\includegraphics[width=.42\textwidth]{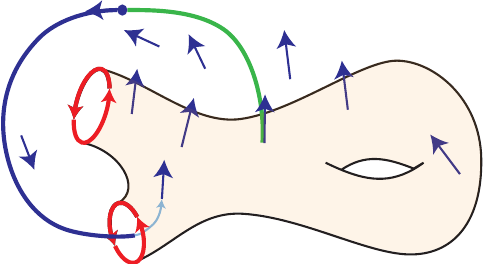}\qquad\qquad\qquad
	\includegraphics[width=.28\textwidth]{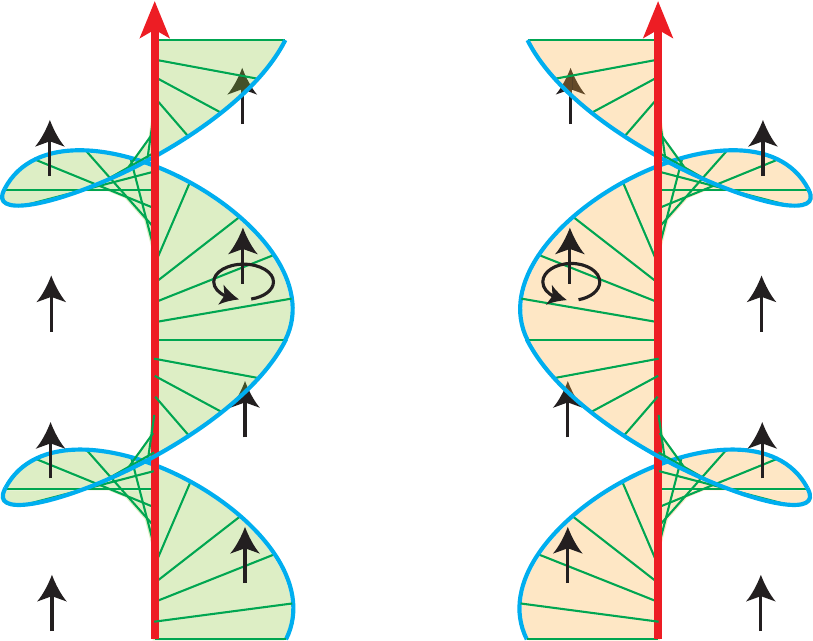}
	\caption{Birkhoff cross sections: on the left the general picture, and on the right how it looks around boundary components, assuming the flow is locally vertical, for a negative and a positive Birkhoff cross section,  respectively.}
	\label{F:BirkhoffSection}
\end{figure}

Since $X$ fixes the coorientation of $S$ and the
orientation of $\partial S$, changing the orientation of $M$ while fixing $X$
changes the orientation of $S$ but not the orientation of $\partial S$.

Notice that a positive Birkhoff cross section $S$ for $(M, X)$ is a page of an open book for $M$ whose oriented binding is $\partial S$.
On the other hand, if  $S$ is a negative Birkhoff cross section  for $(M, X)$, then $\overline{S}$ is a positive Birkhoff cross section
for $(\overline{M}, X)$ and hence $\overline{S}$   is a page of an open book for $\overline{M}$ whose oriented binding is $\overline{\partial{S}}$.

\begin{definition}\label{def: convexedivide}
We say that an admissible divide~$P$ on a surface~$\S$ equipped with a Riemannian metric is~{\em{convex}} if every curve in~$P$ is a closed geodesic, every geodesic on~$\S$ intersects $P$ in bounded time, and every region of~$\S\setminus P$ can be foliated by concentric curves with non-vanishing curvature. In this context, a foliation of the regions of~$\S\setminus P$ by concentric curves is called a {\em{convex}} foliation. \end{definition}

Note that every admissible divide $P$ on a surface $\S$ with constant curvature is convex. 
Loosely speaking, this can be seen as follows: Divide every region of $\S \setminus P$ into triangles by picking a point inside every region, and connecting it to the double points of the divide as depicted on the left in Figure~\ref{F:Bubble}. It remains to foliate every such triangle by a collection of curves that connect the two sides of each triangle that does not belong to $P$ such that they are orthogonal to these sides, and strictly convex. 
This last step can be achieved  by starting with segments that are parallel to the divide and bending them until they become orthogonal to the sides.
Note that the leaves of the foliation we just described can then be chosen as level sets of an ordered  Morse function on $\S$ adapted to $P$. 

An admissible divide may fail to be convex if, for example, there is a closed geodesic staying in one region of~$\S\setminus P$, as depicted on the right in Figure~\ref{F:Bubble}. 

\begin{figure}[ht]
\includegraphics[width=.4\textwidth]{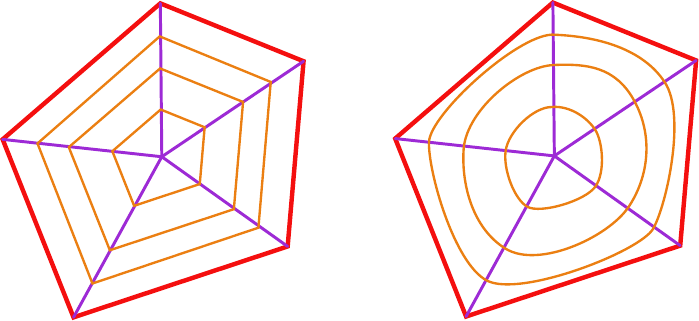}
\hspace{2cm}
\includegraphics[width=.3\textwidth]{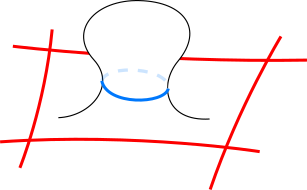} 
\caption{On the left, foliation in  the interior of a geodesic divide (red) in constant curvature by convex curves (orange). On the right, a divide (red) that is not convex: the geodesic (blue) does not intersect the divide.}
\label{F:Bubble}
\end{figure}

\begin{theorem}[\cite{b,f}] \label{thm: bir}
Assume that~$P$ is a convex divide and $\mathcal{F}$ is a convex foliation of~$\S\setminus P$.
The set of pairs $(p,v)$, where $p$ belongs to some black region of  $\S\setminus P$ and $v$ is a unit vector tangent to $\mathcal{F}$ is a Birkhoff cross section for the geodesic flow on~$ST\S$.
\end{theorem}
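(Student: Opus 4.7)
The plan is to verify in turn each of the four defining conditions of a Birkhoff cross section for the set $S$ under consideration.

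First, I would show that $S$ is a compact embedded surface in $ST\S$ whose boundary consists of periodic orbits. Write $S=\bigcup_B S_B$, where $B$ ranges over the black regions of $\S\setminus P$ and $S_B$ is the closure in $ST\S$ of the unit vectors tangent to $\mathcal{F}$ at interior points of $B$. Over the interior of $B$, the projection $S_B\to B$ is a two-sheeted covering, since $\mathcal{F}$ has exactly two unit tangent directions at each non-singular point. At the foliation's center (the index~$2$ critical point of the adapted Morse function) the two sheets meet along the full fiber circle, so $S_B$ is topologically an annulus. Over the interior of an edge $e\subset\partial B$ of $P$, the tangent vectors to $\mathcal{F}$ limit to the two tangent lifts of the geodesic $e$, which are periodic orbits of $\Phi$. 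Over each vertex $v$ of $P$, I claim $S_B$ contributes an arc in the fiber $\pi^{-1}(v)$ interpolating between the tangent lifts of the two edges of $\partial B$ at $v$; the opposite black region $B'$ meeting $v$ in the antipodal sector contributes the same arc, so $S_B$ and $S_{B'}$ glue together along these vertex fibers. The resulting $S$ is a compact embedded surface with $\partial S$ equal to the tangent lifts of the edges of $P$, automatically tangent to $\Phi$.

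Second, to show that $\Phi$ is transverse to the interior of $S$, I would use the Sasaki splitting $T_{(p,v)}ST\S = H_{(p,v)} \oplus V_{(p,v)}$. At an interior point $(p,v)$ with $v$ tangent to a leaf $L$ of $\mathcal{F}$, parametrize $L$ by arc length $s\mapsto\gamma(s)$ with $\gamma(0)=p$, $\gamma'(0)=v$. Then $s\mapsto(\gamma(s),\gamma'(s))$ lies in $S$ and its velocity has horizontal part $v$ and vertical part $\nabla_v\gamma'=k_g(L)\,n$, where $n$ is the unit normal of $L$ and $k_g(L)\neq 0$ by the convex foliation hypothesis. Since the geodesic spray $X_{(p,v)}$ is purely horizontal (equal to the horizontal lift of $v$), it has zero vertical component and therefore cannot lie in $T_{(p,v)}S$, giving transversality.

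Third, for bounded return time I would argue as follows. Given $(q,w)\in ST\S$, the forward orbit under $\Phi$ projects to a geodesic $\gamma$ on $\S$, which meets $P$ in bounded time by the convexity of $P$. Since the black/white coloring alternates across each edge of $P$, $\gamma$ enters a black region $B$ within a bounded number of edge crossings. Parameterize $\mathcal{F}$ on $B$ by a continuous function $\rho:B\to[0,\rho_{\max}]$ vanishing on $\partial B$, maximal at the center, and with leaves of $\mathcal{F}$ as its level sets. Along the arc of $\gamma$ inside $B$, $\rho\circ\gamma$ starts and ends at $0$, so it attains a strict interior maximum at some $t_0$. At $t_0$, $\dot\gamma(t_0)$ is tangent to the leaf $\rho^{-1}(\rho(\gamma(t_0)))$, so $(\gamma(t_0),\dot\gamma(t_0))\in S$, completing the verification.

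The main technical obstacle lies in the first step, namely showing that the pieces $S_B$ and $S_{B'}$ at a vertex of $P$ glue to a \emph{smoothly} embedded surface. This requires a careful local analysis near a corner where two geodesic arcs of $P$ meet, verifying that the limit tangent arcs from the two adjacent black regions coincide as subsets of $\pi^{-1}(v)$ and that the tangent planes of $S_B$ and $S_{B'}$ match to first order along them. A secondary check, needed for the statement in Theorem~\ref{thm: main}(4) that the cross section is \emph{negative}, is to compare the coorientation of $S$ by $X$ along each boundary orbit with the flow direction; this reduces to inspecting on which side of each edge of $P$ the adjacent black region lies.
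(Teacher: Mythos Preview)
Your proposal is correct and follows essentially the same approach as the paper: verify each defining condition of a Birkhoff cross section, using the non-vanishing geodesic curvature of the leaves for transversality and the bounded-intersection hypothesis for the return time. The paper's proof is much terser (three sentences), glossing over both the surface structure near the double points of~$P$ (which you rightly flag as the main technical point, and which the paper handles only by reference to figures) and the passage from ``every geodesic meets $P$ in bounded time'' to ``every orbit meets $S^\bullet$ in bounded time'' (which your level-function argument on each black region makes explicit).
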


The proof of Theorem~\ref{thm: bir} is straightforward \red{as we briefly outline here}.  Denoting by~$S^\bullet$ the set of tangent vectors mentioned in Theorem~\ref{thm: bir}, it is easily seen that~$S^\bullet$ is  a surface whose boundary is the link~$L(P)$ of the divide~$P$.
Hence the boundary $\partial S^\bullet$ is indeed tangent to the geodesic flow.
Since the foliation~$\mathcal F$ is convex, the contact between leaves of this foliation and geodesic is only of order 1 (because of the assumption on the curvature), hence of order 0 in the tangent bundle.
This means that the interior of~$S^\bullet$ is transverse to the geodesic flow.
Finally since every geodesic on~$\S$ intersects $P$ in bounded time, every orbit of the geodesic flow intersects~$S^\bullet$ in bounded time.

\begin{remark}
Theorem~\ref{thm: bir} provides one Birkhoff cross section canonically associated to a given divide~$P$, which corresponds to the page~$S_0$ depicted in Figure~\ref{F:ACampo1}.
Another natural Birkhoff cross section is the set of pairs $(p,v)$, where $p$ belongs to some white region of  $\S\setminus P$ and $v$ is a unit vector tangent to $\mathcal{F}$.
It corresponds to the page~$S_\pi$, also   depicted in Figure~\ref{F:ACampo1}.
\end{remark}

\begin{remark} Isotopy classes of {\em negative} Birkhoff cross sections whose boundary is symmetric were classified by Cossarini-Dehornoy~\cite{cd}.
Marty proved that a flow cannot admit at the same time positive and negative Birkhoff cross sections~\cite{marty}, hence there are no {\em positive} Birkhoff cross sections for geodesic flows.
 \end{remark}

\section{Equivalence of open books} \label{sec: proof}

The proof of Theorem~\ref{thm: main} follows by combining Proposition~\ref{prop: onepageisotopic}, Proposition~\ref{prop: 13} and Lemma~\ref{lem: 14} below.

\subsection{A criterion for the isotopy of open books}

In this subsection, we give a proof of Proposition~\ref{prop: onepageisotopic} from the Introduction, which gives a simple criterion to show that two open books on a given $3$-manifold are isotopic. 

\smallskip

\noindent {\bf Proposition 1.2.}
\red{Two open books $(B, \pi)$ and $(B', \pi')$ for a closed and oriented $3$-manifold $M$ are isotopic,  provided that the 
 pages $\overline{\pi^{-1}(0)}$ and $\overline{\pi'^{-1}(0)}$ are isotopic in~$M$. }

\begin{proof} Denote by $S$ and $S'$ the pages $\overline{\pi^{-1}(0)}$ and $\overline{\pi'^{-1}(0)}$, respectively. 
Let $(\phi_t)_{t\in\R}$ denote the isotopy that takes $S$ to $S'$, i.e.,  $\phi_0=\id_M$ and $\phi_1(S)=S'$. 
Note that the open book $(\phi_1(B),\pi\circ\phi_1^{-1})$, which is the image of $(B, \pi)$ under $\phi_1$,  
 satisfies $(\pi\circ\phi_1^{-1})^{-1}(0)=(\phi_1\circ\pi^{-1})(0)=\phi_1(S)=S'$ and it is isotopic to $(B, \pi)$ by $(\phi_t)$. 
Therefore, up to composing by~$\phi_1$, we can assume that $S=S'$ and in particular $B=B'$. 
In order to make the notation lighter, we then denote $\pi:=\pi\circ\phi_1^{-1}$. 
In other words, we can assume that the two open books have a common page and the same binding.

By cutting along $S$, we obtain a manifold~$M\coupe S$ (which is the metric completion of~$M\setminus S$, with two copies of~$S$ in its boundary each corresponding to one face of~$S$), and two functions $\pi:M\coupe S\to[0,1]$ and $\pi':M\coupe S\to[0,1]$, with $\pi|_{S\times\{0\}}=\pi'|_{S\times\{0\}}=0$ and $\pi|_{S\times\{1\}}=\pi'|_{S\times\{1\}}=1$.
Thanks to the normal form of open books along the binding, we can identify $M\coupe S$ with $S\times[0,1]$, so that $\pi$ is the height function $\pi(x,z)=z$. 
In the same vein, we can suppose that $\pi'$ coincides with $\pi$ on~$\partial S\times[0,1]$, and so on all of~$\partial (S \times[0,1])$.

All in all, we have a function $\pi':S\times[0,1]\to[0,1]$, which coincides with the height function of all of $\partial(S\times[0,1])$ and we wonder whether $\pi'$ is isotopic to the height function.
Both functions $\pi$ and $\pi'$ define foliations with trivial holonomy on~$S\times [0,1]$. 
So we can apply the main result of Roussarie~\cite{ro} and deduce that these foliations are isotopic. 
This isotopy realizes an isotopy between the two open books.
\end{proof}

\subsection{\red{Comparing particular pages}}

\begin{definition} \label{def: openb} Suppose that $P$ is an admissible divide on a closed and oriented surface $\S$ which is equipped with a Riemannian metric and  let $f: \S \to \R$ be an ordered Morse function adapted to $P$. Then \red{let $\overline{\ob}_1$ (resp. $\ob_1$) denote the A'Campo-Ishikawa open book for $ST\S$ (resp. $ST^*\S$),  and $\ob_2$ denote the Giroux open book for $V(\S)$. Suppose further that $P$ is {\em convex} with respect to the metric on $\S$.  Let  $\overline{\ob}_3$ denote the geodesic open book for $ST\S$ induced by the geodesic flow, whose pages are {\em negative} Birkhoff cross sections,  and  let $\ob_3$  denote  the corresponding \red{cogeodesic} open book for $ST^*\S$, whose pages are {\em positive} Birkhoff cross sections.}
\end{definition}

\begin{proposition} \label{prop: 13}   Under the identification of $ST\S$ with $V(\S)$, the \red{A'Campo-Ishikawa} open book $\ob_1$ for $ST^*\S$ and the \red{Giroux} open book $\ob_2$ for  $V(\S)$
 are \red{isotopic}.
\end{proposition}

\begin{proof} This is essentially contained in the first author's PhD thesis \cite[Chapter 1]{d}, where tangent bundles were considered rather than cotangent bundles. Here we work with $V(\S) \cong ST^*\S$ which is more natural from the contact geometric point of view. We would like to show that  $\ob_1$ for $ST^*\S$ is orientation-preserving isomorphic to   $\ob_2$ for $V(\S)$.  Instead, we first set up an isomorphism between  $\overline{\ob}_1$ and $\ob_2$ that is isotopic to the identity, and then derive the conclusion by reversing the orientation on the pages of  $\overline{\ob}_1$. 

\begin{lemma}  \label{lem: bin}   Under the identification of $ST\S$ with $V(\S)$, the binding  
of   $\overline{\ob}_1$ corresponds to the binding of  $\ob_2$. \end{lemma}

\begin{proof} The binding of  $\overline{\ob}_1$  (see Section~\ref{sec: ACampo} and Figures~\ref{F:ACampo1} and \ref{F:ACampo2}) is the set of unit vectors tangent to the divide $P$. Under the identification of $ST\S$ with $V(\S)$, each unit tangent vector represents a cooriented line, by taking its normal.  Now we show that the set of these cooriented lines normal to $P$ is the binding of  $\ob_2$ for~$V(\S)$. Recall that the characteristic surface  $C_{\widehat{X}}$ (see Section~\ref{sec: cano3}) consists of those cooriented lines containing the gradient-like vector  field $X$ for $f$, which can be assumed to be the gradient $\nabla f$ away from the critical points of $f$. Notice that we fixed $f$ from the beginning of the proof, but we are free to choose the gradient-like vector field $X$, as long as it is in a certain form near the critical points of $f$ as we explained in Section~\ref{sec: conv}.

The binding of $\ob_2$ is given by the intersection of  $f_\xi^{-1}(0)$  with $C_{\widehat{X}}$.   First of all, we observe that $f_\xi^{-1}(0)$ does not intersect the fibers above the Morse charts around index $2$ or index $0$ critical points. Away from the index $1$ critical points of $f$, the surface  $f_\xi^{-1}(0)$   consists of all the circle fibers above the divide $P=f^{-1}(0)$. Since $X=\nabla f$ away from the critical points, there are two points in each such circle fiber that belongs to $C_{\widehat{X}}$,  corresponding to the normal directions to $P$.  So, we conclude that, possibly except for a neighborhood of index $1$ critical points,
the binding of~$\ob_2$ consists of cooriented normal lines along $P$.  Nevertheless, one can check that this is  also the case on a Morse chart around each index $1$ critical point of $f$ either by looking at Figure~\ref{F:CriticalPoint} right, or by the following argument.

Using local coordinates $x_0, x_1$ for a Morse chart centered at an index $1$ critical point, and denoting the corresponding cotangent fiber coordinates  by $y_0=\cos \theta$ and $y_1=\sin \theta$, we have (cf. \cite{m}):
$$ f(x_0, x_1)=-x_0^2 +x_1^2$$
\begin{equation}\label{eq: vect} X=-x_0 \frac{\del}{\del x_0}+ x_1 \frac{\del}{\del x_1} \end{equation}
$$\xi=\ker (\cos \theta dx_0 -\sin \theta dx_1) $$
\begin{equation}\label{eq: mas}
f_\xi (x, y) = -x^2_0 +x_1^2 + \widetilde{\eta}  \chi (x) (y_0^2 - y_1^2) = -x^2_0 +x_1^2 + \widetilde{\eta} \chi (x) \cos 2\theta \end{equation}
$$\widehat{X}=-x_0 \frac{\del}{\del x_0}+ x_1 \frac{\del}{\del x_1} -\sin 2\theta  \frac{\del}{\del \theta}$$ where $\widetilde{\eta} >0$ is a sufficiently small real number.   Notice that $$\xi(\widehat{X})=0 \iff -x_0 \cos \theta = x_1 \sin \theta.$$ It follows that  the binding $f_\xi^{-1}(0) \cap  C_{\widehat{X}}$ consists of the following four disjoint line segments

$\theta=\pi/4$ and $-x_0=x_1$

$\theta=3\pi/4$ and $x_0=x_1$

$\theta=5\pi/4$ and $-x_0=x_1$

$\theta=7\pi/4$ and $x_0=x_1$

\noindent in the trivialized circle bundle over this Morse chart. Therefore, since $P=f^{-1}(0)$ is given by the lines $x_0 = \pm x_1$ in this chart, we conclude that the binding  consists of   cooriented normal lines along $P$, on this chart as well, which finishes the proof of the lemma.  \end{proof}

Next we show that the four distinct pages of $\overline{\ob}_1$ coincides with that of $\ob_2$, under the identification of $ST\S$ with $V(\S)$.  
The four pages we have in mind are the pages whose images are purely real or purely imaginary under the fibration map defining $\overline{\ob}_1$ given in Theorem~\ref{thm: ACampo},  namely the pages $S_0$, $S_{\pi/2}$, $S_\pi$ and  $S_{3\pi/2}$ of $\overline{\ob}_1$. 
\red{Of course it would be enough to show that these two open books have one page in common but it is not harder to prove that they share four pages.}

\begin{lemma} \label{lem: twopage1} Under the identification of $ST\S$ with $V(\S)$,  the union $S_0 \cup S_\pi$  corresponds to the characteristic surface $C_{\widehat{X}}$, which is the union of two pages in $\ob_2$. \end{lemma}

\begin{proof} The interior of the page $S_0$ (resp. $S_{\pi}$) of $\overline{\ob}_1$ consists of the unit vectors tangent to the level sets of $f$ in black (resp. white) regions of $\S \setminus P$ (see Figure~\ref{F:ACampo1}).  In particular,  the whole circle fiber above a critical point  of index $2$  (resp.  index~$0$) is included in $S_0$ (resp{.}~$S_\pi$).
We claim that the set of corresponding conormal lines of all the level sets in  black and white regions belongs to the characteristic surface~$C_{\widehat{X}} \subset V(\S)$, up to isotopy. This is clear outside of a neighborhood of the critical points, since the vector field $X$ is assumed to be equal to $\nabla f$, which is indeed normal to the level sets of $f$.  In addition, $X$ is normal to the level sets in a neighborhood of an index $1$ critical point by (\ref{eq: vect}), see Figure~\ref{F:CriticalPoint}.

On the other hand, over a critical point of index $2$ or index $0$, the whole fiber in $V(\S)$  is included in $C_{\widehat{X}}$.  In addition, by choosing the vector field $X$ to be sufficiently close to $\nabla f$ on a Morse chart around an  index $2$  (resp.  index $0$)  critical point, we can guarantee that the piece of $S_0$ (resp.  $S_\pi$) is isotopic to the piece of $C_{\widehat{X}}$ above that chart. In other words, we consider an isotopic fibration map defining the open book  $\overline{\ob}_1$, where the isotopy only takes place above the Morse charts around index $2$ or index $0$ critical points.  Therefore, this isotopy does not affect the discussion in the proof of Lemma~\ref{lem: bin}, since the binding in each open book does not intersect the fibers above the Morse charts around the critical points of index $2$ or index $0$. Hence, we finish the proof of the lemma, by adding in the bindings of both open books.  \end{proof}

\begin{lemma}  \label{lem: twopage2} Under the identification of $ST\S$ with $V(\S)$,  the union $S_{\pi/2} \cup S_{3\pi/2}$ corresponds to the level set  $f_\xi^{-1}(0)$, which is the union of two pages in $\ob_2$.  \end{lemma}

\begin{proof}  The interior of the page $S_{\pi/2}$ (resp. $S_{3\pi/2}$) of  $\overline{\ob}_1$ consists of unit vectors along the divide $P$ pointing into black (resp.  white) regions of $\S \setminus P$. Notice that $f_\xi^{-1}(0)$ does not intersect the fibers above the Morse charts around index $2$ or index~$0$ critical points.  Moreover,   away from the index $1$ critical points of $f$, the zero-level set of $f_\xi$ consists of all the circle fibers above the divide $P$ and thus all cooriented lines are in $f_\xi^{-1}(0)$,  where the binding of $\ob_2$ separates the two pages. To summarize,  away from the index $1$ critical points, the union $S_{\pi/2} \cup S_{3\pi/2}$ corresponds to the level set  $f_\xi^{-1}(0)$.

Near each index $1$ critical point, we understand both $f_\xi^{-1}(0)$ and the union $S_{\pi/2} \cup S_{3\pi/2}$  explicitly in local coordinates (again, see Figure~\ref{F:CriticalPoint} right). According to (\ref{eq: mas}), the surface $f_\xi^{-1}(0)$ is given locally by $$\{ (x_0, x_1, y_0, y_1) \; | \;  -x_0^2 + x_1^2 +\widetilde{\eta}  \chi(x) (y_0^2 -y_1^2) =0 \}$$ while according to Section~\ref{sec: ACampo}, the union $S_{\pi/2} \cup S_{3\pi/2}$  is given locally by $$\{ (x_0, x_1, u_0, u_1) \; | \;  -x_0^2 + x_1^2 -\dfrac{1}{2} \eta^2 \chi(x) (u_1^2 -u_0^2) =0\}.$$ So, we see that these surfaces coincide near each index one critical point,  by taking $\widetilde{\eta}=\eta^2/2$ and using identical cut-off functions $\chi : \S \to \R$. \end{proof}

We conclude the proof of Proposition~\ref{prop: 13} by Lemma~\ref{lem: twopage1} and Lemma~\ref{lem: twopage2}  that the four pages of  $\overline{\ob}_1$ coincides with the four pages of $\ob_3$ under the aforementioned diffeomorphism between  $ST\S$ and $V(\S)$. This is more than enough to conclude that $\ob_1$ is isotopic to $\ob_3$ by Proposition~\ref{prop: onepageisotopic}, after  reversing the orientation of the pages of $\overline{\ob}_1$ first. \end{proof}

\begin{lemma}\label{lem: 14} The \red{A'Campo-Ishikawa} open book $\ob_1$ and the \red{cogeodesic} open book $\ob_3$  for $ST^*\S$ are isotopic, provided that $P$ is convex with respect to the given metric on $\S$.
\end{lemma}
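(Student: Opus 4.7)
The plan is to identify the page $S_0$ of the A'Campo--Ishikawa open book $\overline{\ob}_1$ on $ST\S$ with the negative Birkhoff cross section $S^\bullet$ produced by Theorem~\ref{thm: bir}. Once I have an orientation--preserving self-diffeomorphism of $ST\S$ carrying $S_0$ to $S^\bullet$, Lemma~\ref{lem: page} will deliver $\overline{\ob}_1\cong\overline{\ob}_4$ on $ST\S$, from which $\ob_1\cong\ob_4$ on $ST^*\S$ follows by reversing page orientations.

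First I would exploit the freedom in choosing an adapted Morse function. Since $P$ is convex, every region of $\S\setminus P$ is a topological disk foliated by the concentric curves of the convex foliation~$\mathcal F$, with a natural center that will serve as an index~$0$ or index~$2$ critical point. I would choose $f\colon\S\to\R$ so that its level sets inside every region of $\S\setminus P$ are precisely the leaves of~$\mathcal F$, and $f^{-1}(0)=P$. This choice is harmless because $\overline{\ob}_1$ depends (up to isomorphism) only on $P$ and not on the particular adapted Morse function, as follows from Proposition~\ref{prop: 12} combined with Johns' construction depending only on the handle decomposition of~$\S$.

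With such an $f$ in hand, Remark~\ref{rem: openb} identifies the interior of $S_0$ with the set of unit vectors tangent to the level sets of $f$ in the black regions, which by construction is exactly the set of unit vectors tangent to the leaves of $\mathcal F$ in the black regions, i.e.\ the defining description of $S^\bullet$ in Theorem~\ref{thm: bir}. The two surfaces share the same boundary, namely the link $L(P)$ of unit vectors tangent to $P$ (the binding of $\overline{\ob}_1$ by Remark~\ref{rem: openb}, and the boundary of $S^\bullet$ by Theorem~\ref{thm: bir}). Over an index~$2$ critical point of $f$ at the center of a black region, both surfaces naturally close up by filling in the entire circle fiber as the limit of tangent directions to the concentric leaves. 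So away from a neighborhood of the fibers above the double points of $P$, the two pages coincide on the nose, and I would only need to produce a further isotopy supported in those neighborhoods.

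The hard part will be this local matching above the double points of $P$. There the page $S_0$ is smoothed by the Hessian term in Equation~\eqref{eq: map}, while $S^\bullet$ requires a smoothing to become transverse to the geodesic flow; both smoothings produce a helicoidal staircase along each component of $L(P)$ (compare Figures~\ref{F:ACampo2} and~\ref{F:BirkhoffSection} right). The argument should reduce to the observation that, in a tubular neighborhood of a boundary component of $L(P)$, such a staircase is unique up to ambient isotopy among smooth surfaces with the prescribed boundary behaviour that are transverse to any given non-singular direction field. Once these local isotopies are assembled into a global one, Lemma~\ref{lem: page} closes the argument, and reversing orientations gives $\ob_1\cong\ob_4$ on $ST^*\S$.
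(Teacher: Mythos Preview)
Your proposal is correct and follows essentially the same route as the paper: identify the bindings of $\overline{\ob}_1$ and $\overline{\ob}_4$ with $L(P)$, match the page $S_0$ with the Birkhoff cross section $S^\bullet$ via Remark~\ref{rem: openb} and Theorem~\ref{thm: bir}, invoke Lemma~\ref{lem: page}, and then reverse orientations. The paper's own proof is considerably terser---it simply asserts that both pages are the set of vectors tangent to the level sets of $f$ in the black regions, without your explicit step of choosing $f$ so that its level curves coincide with the leaves of the convex foliation $\mathcal F$, and without any discussion of the local matching above the double points of $P$; your extra care on these two points is reasonable but not strictly required for the level of rigor the paper aims at.
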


\begin{proof}
 The binding of both open books correspond to the link~$L(P)$ consisting of those vectors tangent to~$P$, hence they coincide. The geometric description in Section~\ref{sec: ACampo} of the page~$S_0$ of the \red{A'Campo-Ishikawa} open book $\overline{\ob}_1$  coincides with the page of the \red{geodesic} open book  $\overline{\ob}_3$ of  Birkhoff \& Fried given by Theorem~\ref{thm: bir}, based on a set of geodesics satisfying the assumptions of a divide (see the top left of Figure~\ref{F:ACampo1}).
\red{Indeed, one can take the foliation of~$\Sigma\setminus P$ described after Definition~\ref{def: convexedivide} for constructing~$\overline{\ob}_3$ and a Morse function~$f$ having the leaves of this foliation for~$\overline{\ob}_1$. 
Then} they both correspond to the set of vectors tangent to the level sets of~$f$ in the black regions of~$\S\setminus P$. 
By reversing the orientations, we get the desired isomorphism of $\ob_1$ and $\ob_3$ using Proposition~\ref{prop: onepageisotopic}.
\end{proof}

{In fact,  one can show that these two open books also have \emph{four} ``common" pages, namely the pages~$S_0, S_{\pi/2}, S_{\pi}$, and $S_{3\pi/2}$ from~$\ob_1$. }

\section{Lefschetz fibrations and open books of minimal genus}\label{sec: genus}

A particular admissible divide on a closed and oriented surface $\S$ of genus $g >1$ was given by Fried~\cite{f}, following Birkhoff~\cite{b}.
This divide consists of~$2g+2$ simple closed curves, with $2g+2$ double points, as we depicted at the top right of Figure~\ref{F:GenusOne}, for $g=3$.
Fried noticed that, when~$\S$ is hyperbolic, the corresponding Birkhoff cross section has genus one,  which can also be recovered from Proposition~\ref{prop: genus}.
The associated Lefschetz fibration $DT^*\S \to D^2$, whose fiber is of genus one ({\em minimal possible}) with $4g+4$ boundary components, was explicitly constructed by the second author in \cite{oz}.
Here we simply observe that this construction does not give the minimal number of boundary components of a fiber among all genus one Lefschetz fibrations, answering  Question (4) at the end of \cite{oz}  negatively.

\begin{proposition} \label{prop: boun}\footnote{This statement was independently proven by C. Bonatti (unpublished, private communication).}
 For any closed and oriented surface $\S$ of genus $g \ge 1$, there is an explicit genus one Lefschetz fibration $DT^*\S \to D^2$ whose fiber has $4g$ boundary components. As a consequence, there is an explicit genus one open book adapted to $(ST^*\S, \xi)$, whose binding has $4g$ components.
\end{proposition}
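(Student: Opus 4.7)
The plan is to exhibit an admissible divide $P$ on $\S$ with $c(P)=2g$ circles and $v(P)=2g$ double points, and then invoke Theorem~\ref{thm: main} together with Proposition~\ref{prop: genus}. For such a $P$, Proposition~\ref{prop: genus} yields an open book of page genus $1+v(P)-c(P)=1$ with $2c(P)=4g$ binding components, and Theorem~\ref{thm: main} realises this open book as the boundary open book of the Johns exact symplectic Lefschetz fibration $\pi_2 : DT^*\S \to D^2$ supporting~$\xi$. Hence the whole argument reduces to producing such a divide.

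I would take $P$ to be the first of the three admissible divides depicted in Figure~\ref{F:GenusOne}. Concretely, view~$\S$ as a standard genus~$g$ surface embedded in~$\R^3$ and place one pair of simple closed curves on each handle so that the two curves of a given pair meet transversely in exactly two points while curves from distinct handles are disjoint. This yields $c(P)=2g$ curves meeting in $v(P)=2g$ points. An equivalent inductive description: start from the $g=1$ torus divide given by two transverse simple closed curves meeting in two points (readily visible from the square model of the torus), and at each stage attach a new handle along a disk in the interior of a complementary face, equipped with a further pair of circles meeting in two points.

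Admissibility of $P$ has three ingredients: $P$ must be connected, $\S\setminus P$ must admit a bipartite black/white coloring, and each component of $\S\setminus P$ must be simply connected. Connectedness is immediate, since each handle contributes an intersecting pair of curves. For the remaining two properties, view~$P$ as a $4$-valent embedded graph with $v=2g$ vertices and $e=2v=4g$ edges; then the sum of the Euler characteristics of the components of $\S\setminus P$ equals
\[
\chi(\S)-(v-e)=(2-2g)-(2g-4g)=2.
\]
Since every open complementary region has Euler characteristic at most~$1$, there are at least two such regions, and a direct inspection of the local model on each handle confirms that there are exactly two and that each is an open disk. The black/white coloring is then automatic.

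The main obstacle lies precisely in the last verification: the Euler-characteristic count alone allows in principle configurations such as two disks plus an annulus, and to rule these out one must track the complementary faces through the local handle model, or equivalently through the inductive connect-sum step. This is a straightforward but careful combinatorial check, most transparently done by drawing the divide as in Figure~\ref{F:GenusOne}. Once admissibility of $P$ is established, Theorem~\ref{thm: main} and Proposition~\ref{prop: genus} deliver the desired genus one Lefschetz fibration $DT^*\S\to D^2$ whose fiber has $4g$ boundary components, together with the induced genus one open book adapted to $(ST^*\S,\xi)$ with $4g$ binding components.
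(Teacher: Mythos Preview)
Your overall strategy---produce an admissible divide with $c(P)=v(P)=2g$ and then invoke Proposition~\ref{prop: genus} and Theorem~\ref{thm: main}---is exactly the approach the paper takes. The gap is in the divide itself.

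First, a bookkeeping slip: the divide you want is the one in the \emph{bottom} row of Figure~\ref{F:GenusOne}, not the first (top) one, which is the Birkhoff--Fried divide with $2g+2$ curves and $2g+2$ double points. More seriously, your concrete description---one intersecting pair of curves on each handle, with pairs on distinct handles pairwise disjoint---does not match any of the three divides in the figure, and in fact fails admissibility for every $g\ge 2$: the resulting $P$ has $g$ connected components, one per handle. Your sentence ``Connectedness is immediate, since each handle contributes an intersecting pair of curves'' establishes only that each pair is connected, not that $P$ is. Once $P$ is disconnected, the complement $\S\setminus P$ cannot consist of disks (your Euler-characteristic count of~$2$ is then realised by regions of nonpositive Euler characteristic together with too few disks), and the argument collapses.

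The paper's divide with $2g$ curves is instead a \emph{cyclic chain}: curves $r_1,\dots,r_{2g}$ arranged so that $r_i$ meets $r_{i+1}$ (indices mod $2g$) in a single point and meets no other $r_j$. This is connected, has $v(P)=2g$ double points, and---as the proof of Theorem~\ref{thm: opt} shows---its complement consists of exactly two $4g$-gons, so it is admissible. Replacing your handle-by-handle pair construction with this cyclic chain repairs the argument, and the rest of your write-up (the appeal to Proposition~\ref{prop: genus} and Theorem~\ref{thm: main}) then goes through exactly as in the paper.
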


\begin{figure}[h!]
	\includegraphics[width=.82\textwidth]{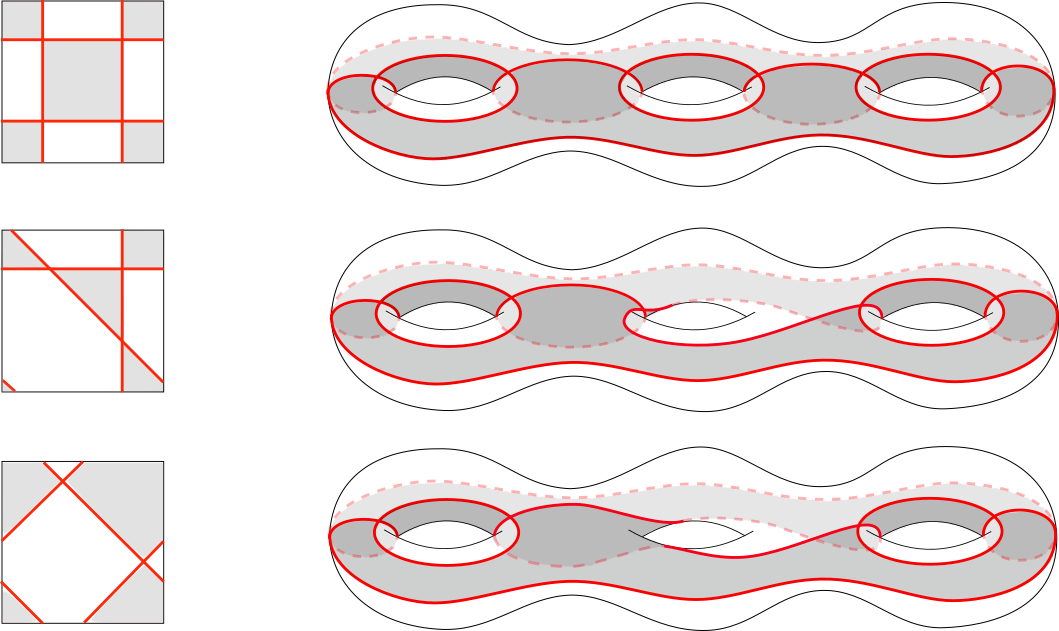}
	\caption{Three admissible divides that yield genus one Lefschetz fibrations on $DT^*\S$.  The top row corresponds to the Birkhoff-Fried divide which consists of~$2g+2$ curves that we depicted  for a torus on the left and on a genus $3$ surface on the right. The middle row corresponds to Brunella's improvement with~$2g+1$ curves. It is obtained by smoothing one crossing, thus connecting the two white regions. The bottom row corresponds to an additional improvement with~$2g$ curves only, obtained by smoothing another crossing and connecting the two black regions. }
	\label{F:GenusOne}
\end{figure}

\begin{proof} The set of $2g$ simple closed curves on $\S$ depicted at the bottom row of Figure~\ref{F:GenusOne} yields an admissible divide on~$\S$.
Since each of these curves intersect exactly two other curves, the regular fiber of the Lefschetz fibration $DT^*\S \to D^2$ obtained by Ishikawa's \red{method} is of genus one, with $4g$ boundary components. Moreover, the monodromy of this Lefschetz fibration can be explicitly computed as a product of Dehn twists. It follows that,  there is an explicit genus one  \red{A'Campo-Ishikawa} open book adapted to $(ST^*\S, \xi)$, whose binding has $4g$ components. Furthermore, by Theorem~\ref{thm: main}, this open book is isomorphic to the Giroux open book as well as the geodesic open book.    \end{proof}

An open book for a closed $3$-manifold induces a Heegaard splitting, where the Heegaard surface is the union of two pages along the binding. If the page of the open book is a surface of genus $h$ with $k$ boundary components, then the Heegaard surface is of genus $2h+k-1$. According to Boileau-Zieschang \cite{bz}, for any closed and oriented surface~$\S$ of genus $g$, the Heegaard genus of $ST^*\S$ is~$2g+1$. Therefore,  the binding of any {\em genus one} open book for $ST^*\S$ must have at least $2g$ connected components, which implies that the fiber of any {\em genus one} Lefschetz fibration $DT^*\S \to D^2$ must have at least  $2g$ boundary components. In other words, for an oriented surface $\S$ of genus at least two,  the binding number $\bn(ST^*\S, \xi)$ \cite{eo} satisfies the following inequality
$$ 2g \leq \bn(ST^*\S, \xi) \leq 4g.$$
This simple observation indicates that there is some room for improvement for the result stated  in Proposition~\ref{prop: boun}, using methods other than discussed in this paper.  For example, {a genus one open book  adapted to~$(ST^*T^2 = T^3, \xi)$ with {\em three} binding components, was constructed by Van Horn-Morris \cite{vhm} (see also \cite[Section~3]{dd} for similar examples obtained by the first author). }

{Next we observe that Proposition~\ref{prop: boun} cannot be improved utilizing the constructions described in this article. Notice that Theorem~\ref{thm: opt} below implies Theorem~\ref{thm: genusone} from the introduction.}

\begin{theorem}\label{thm: opt}
Up to homeomorphism, there are exactly three admissible divides (depicted in Figure~\ref{F:GenusOne}) on a genus~$g \geq 1$ surface~$\S$ that yield genus one Lefschetz fibrations~$DT^*\S\to D^2$.
\end{theorem}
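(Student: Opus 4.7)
The plan is to start from Proposition~\ref{prop: genus}, which translates the ``genus-one fiber'' condition into the numerical identity $v(P)=c(P)$. Writing $v=c$, the task is to classify admissible divides with this property up to homeomorphism of $\S$.

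The first key step I would establish is a \emph{parity and length lemma}: every one of the $c$ immersed circles (``strands'') making up $P$ has length exactly $2$ in the $4$-valent graph $P$. Since each edge belongs to a unique strand and there are $2v$ edges, the average length is $2v/c=2$. Walking along a strand and passing straight through a crossing, the color of the region immediately on the left flips, because the four local quadrants at a crossing alternate colors. Since the strand is closed, the number of crossings it meets must be even, forcing each $\ell_i \ge 2$ to be even; together with average $2$, all $\ell_i=2$. Hence each strand is either (A) an embedded circle with two crossings with two other strands, or (B) a figure-eight with one self-intersection and no other crossings. In case (B) the figure-eight shares no crossings with the rest of $P$, so admissibility forces $c=1$; but $c=1$ gives $F=m_0+m_2=3-2g<2$ when $g\ge 1$, contradicting $m_0,m_2\ge 1$. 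Only case (A) survives, and the resulting intersection multigraph on $c$ vertices with $c$ edges is $2$-regular and connected, hence a $c$-cycle (with the convention that for $c=2$ this is the digon built from two crossings between the two strands).

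The upper bound $c\le 2g+2$ then follows from a linear-algebra obstruction over $\mathbb{F}_2$. Labeling the strands cyclically $\alpha_1,\dots,\alpha_c$, their classes $[\alpha_i]\in H_1(\S;\mathbb{F}_2)\cong \mathbb{F}_2^{2g}$ have Gram matrix $M_{ij}=\omega([\alpha_i],[\alpha_j])\equiv \#(\alpha_i\cap \alpha_j)\bmod 2$ for the symplectic intersection form $\omega$. For $c\ge 3$ this $M$ equals the mod-$2$ adjacency matrix of the $c$-cycle, and vanishes for $c=2$. Solving the kernel equation $x_{i-1}+x_{i+1}=0$ over $\mathbb{F}_2$ shows $\dim\ker M=1$ if $c$ is odd (spanned by $\vec{1}$) and $\dim\ker M=2$ if $c$ is even (spanned by the characteristic vectors of the odd and even indices), so $\mathrm{rk}(M)=c-1$ or $c-2$ accordingly. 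Since $\omega$ has rank $2g$ on $\mathbb{F}_2^{2g}$, its restriction to the span of the $[\alpha_i]$'s has rank at most $2g$, giving $\mathrm{rk}(M)\le 2g$ and thus $c\le 2g+2$ in both parities. The matching lower bound $c\ge 2g$ is immediate from $F=c+2-2g\ge 2$.

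Finally, for each $c\in\{2g,2g+1,2g+2\}$ I would verify that the admissible divide is unique up to homeomorphism of $\S$, matching the three configurations of Figure~\ref{F:GenusOne}. Here $c=2g$ forces $(m_0,m_2)=(1,1)$ and $c=2g+1$ forces $\{m_0,m_2\}=\{1,2\}$ directly from $F=c+2-2g$; for $c=2g+2$ the two kernel relations $\sum_{i\text{ odd}}[\alpha_i]=\sum_{i\text{ even}}[\alpha_i]=0$ split $P$ into individually $\mathbb{F}_2$-null-homologous odd- and even-indexed sub-divides, and the full $2$-coloring is the XOR of the two sub-colorings; chasing the common refinement of the four ``XOR-types'' rules out $(1,3)$ and $(3,1)$ and leaves $(m_0,m_2)=(2,2)$. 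Uniqueness of the cyclic embedding itself, for each $c$, then follows by normalizing the homology classes $([\alpha_i])$ via the mapping class group action on $H_1(\S;\mathbb{Z})$. The most delicate step I expect is this last one---exclusion of $(1,3)$ and $(3,1)$, and verifying that no exotic cyclic embedding exists on $\S_g$---since the parity-length lemma and the Gram-matrix bound are relatively clean.
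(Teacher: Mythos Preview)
Your opening moves coincide with the paper's: both reduce to $v(P)=c(P)$ via Proposition~\ref{prop: genus}, both use the checkerboard parity to force every strand to carry exactly two double points (with multiplicity), and both conclude that the dual graph $\Gamma(P)$ is a cycle. Your explicit exclusion of the figure-eight case ($c=1$) via the face count $F=3-2g<2$ is a small point the paper leaves implicit.

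After this the two arguments genuinely diverge. The paper never bounds $c$ in terms of $g$ a priori. Instead it reconstructs the pair $(\S,P)$ directly: a regular neighbourhood $\S_P$ of $P$ is a cyclic chain of $k=c(P)$ ribbons, the open chain $r_1\cup\cdots\cup r_k$ is combinatorially unique, and closing $r_k$ back to $r_1$ admits two homeomorphism types when $k$ is even and one when $k$ is odd. Capping boundary components and computing $\chi$ then \emph{outputs} $g$ as a function of $k$ and the closure type, giving the three families and their uniqueness simultaneously.

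Your route---bounding $\mathrm{rk}(M)\le 2g$ for the mod-$2$ intersection Gram matrix of the $[\alpha_i]$, together with the Euler lower bound $F=c+2-2g\ge 2$---is an elegant alternative way to pin down $c\in\{2g,2g+1,2g+2\}$, and the rank computation for the cycle adjacency matrix is correct. But the final classification step is where the real content lies, and here your proposal has a gap. ``Normalizing the homology classes $[\alpha_i]$ via the mapping class group action on $H_1(\S;\mathbb Z)$'' does not suffice to prove uniqueness of the embedded cyclic chain: homology classes do not determine isotopy classes of curve systems, and two chains with identical intersection and face data could in principle sit differently in $\S$. Likewise, the XOR argument sketched for excluding $(m_0,m_2)\in\{(1,3),(3,1)\}$ when $c=2g+2$ is plausible but not carried through. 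To finish along your lines you would need to reconstruct $(\S,P)$ from the combinatorics of the chain and its complementary polygons---which is precisely what the paper's ribbon argument does in one stroke, and is probably the cleanest way to complete your outline as well.
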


\begin{proof}
Let $P$ be an admissible divide on~$\S$.
Recall that $c(P)$ denotes the number of curves that form~$P$ and $v(P)$ the number of double points in $P$.
By Proposition~\ref{prop: genus}, the genus of the associated Lefschetz fibration~$DT^*\S\to D^2$ is $1+v(P)-c(P)$.
Hence this genus is equal to~$1$ if and only if $c(P)=v(P)$.
Since~$P$ is admissible, its complement can be black-and-white colored, which means that every curve in~$P$ has an even number of double points (counting self-intersection points twice).
Since $P$ is connected, every component of $P$ has at least two double points.
By counting all double points twice (one for every curve that traverses the point), one obtains
$2v(P)\ge 2c(P)$, with equality if and only if every curve in~$P$ has exactly two double  points on it. One can check that this is indeed the case for all the divides depicted in Figure~\ref{F:GenusOne}.

Assume now that $P$ is an admissible divide so that every curve in~$P$ has exactly two double points on it.
Consider the {dual} graph~$\Gamma(P)$ whose vertices are the curves in~$P$ and whose edges are the double points. Note that ~$\Gamma(P)$ is a graph with loops and multiple edges.
Since $P$ is admissible, $\Gamma(P)$ is connected, for otherwise the complement~$\S\setminus P$ would have a non-trivial topology.
Our assumption on the number of double points implies that the vertices of~$\Gamma(P)$ all have degree~$2$.
Hence~$\Gamma(P)$ is a cycle, and therefore $P$ is {cyclic} chain of circles.

If one removes a small {open} disk in each region of~$\S\setminus P$, one obtains a surface with boundary, which deformation-retracts on a neighborhood~$\S_P$ of~$P$.
The surface with boundary~$\S_P$ is then a {cyclic} chain of circular ribbons, say~$r_1, \dots, r_k$, {where $k=c(P)$.}
One can recover~$\S_P$ by gluing one by one all the ribbons, starting with~$r_1$, then gluing~$r_2$, then $r_3$, and so on, inductively.
Finally one adds~$r_k$ and glues back~$r_k$ to~$r_1$.
Since~$\S_P$ is orientable, every ribbon is a standard annulus, and there is no freedom in the homeomorphism type of the gluing, except at the end when gluing~$r_k$ back to~$r_1$.
Indeed, as shown in Figure~\ref{F:Ribbon}, the open chain~$r_1\cup\dots\cup r_k$ has four boundary components, which are color-coded.
When gluing~$r_k$ back to~$r_1$, one has to chose which boundary component is glued with which one.

\begin{figure}[h!]
	\includegraphics[width=.92\textwidth]{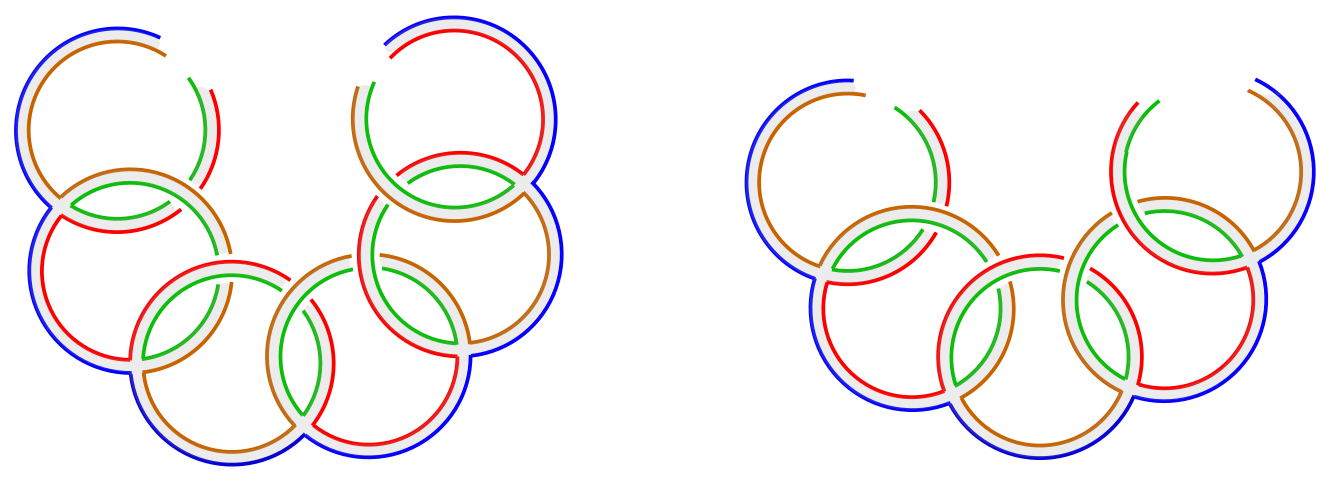}
	\caption{Proof of Theorem~\ref{thm: opt} is illustrated for even $k$ on the left and for odd $k$ on the right.  When reconstructing a neighborhood~$\S_P$ of~$P$ in~$\S$, one has a chain of ribbons~$r_1, \dots, r_k$.
	The surface~$\S_P$  with boundary is obtained by connecting the two open bands in both figures.
	When $k$ is even, there are two possibilities: either one glues each (colored) boundary with itself, or one glues blue with green, green with blue, red with brown, and brown with red.
	In the first case~$\S_P$ has four boundary components of length~$k$, while in the second it has two boundary components of length~$2k$.
	When $k$ is odd, there are also two possibilities: either one glues blue with blue, brown with brown, red with green, and green with red, or one glues blue with brown, brown with blue, red with red, and green with green. In either case~$\S_P$ has three boundary components, two of length~$k$ and one of length~$2k$. }
	\label{F:Ribbon}
\end{figure}

If $k$ is even, there are exactly two possible cases that lead to two different divides.
In the first case, one glues each boundary component with itself and therefore~$\S_P$ has four boundary components.
When filling these boundary components with disks, thus recovering~$\S$, one checks that~$\S$ is made of four~$
k$-gons glued along their edges.
It has genus~$\frac{k-2}2$, so that {$k = 2g+2$ and the divide~$P$ corresponds to the  Birkhoff-Fried divide~\cite{f} on~$\S$ (as depicted at the top row of Figure~\ref{F:GenusOne}).}
In the second case, one glues each boundary component with another one so that~$\S_P$ has two boundary components.
When recovering~$\S$, it is made of two $2k$-gons glued along their edges.
It has genus~$\frac k2$,  so that {$k = 2g$ and the divide~$P$ is the one depicted at the bottom row of Figure~\ref{F:GenusOne}.}

Finally when $k$ is odd, there is only one possibility up to homeomorphism:
two boundary components have to be glued to themselves, and the two others are glued one with another.
Thus~$\S$ is made of two $k$-gons and one $2k$-gon glued along their edges.
It has genus~$\frac{k-1}2$, so that {$k = 2g+1$ and the divide~$P$ corresponds to the Brunella divide depicted at the middle row of Figure~\ref{F:GenusOne}.}
\end{proof}

Now we turn our attention to the monodromies associated to the genus one geodesic open books obtained by restricting to the boundary of the three genus one Lefschetz fibrations in Theorem~\ref{thm: opt}. 
Since the pages have genus one, after contracting their boundary components into points, the monodromies correspond to isotopy classes of orientation-preserving homeomorphisms of the $2$-dimensional torus. 
\red{Such an isotopy class admits unique linear map, which is then described by an element in~$\SLZ$ (see for example~\cite[Thm 2.6.1]{kh}). 
Changing the basis of the torus conjugates this element, hence only the conjugacy class in~$\SLZ$ is relevant. 
It turns out that every non-elliptic conjugacy class in~$\SLZ$ contains a positive product of the matrices~$L=(\begin{smallmatrix} 1&0\\1&1\end{smallmatrix})$ and $R=(\begin{smallmatrix} 1&1\\0&1\end{smallmatrix})$, which is unique up to cyclic permutation. 
Summarizing, provided that they are not elliptic, the monodromies associated to the genus one open books are associated to word in~$L$ and~$R$ up to cyclic permutation.}

The monodromy of the Birkhoff-Fried geodesic open book was computed by Ghys~\cite{gh} and Hashiguchi~\cite{h}. Moreover, Brunella already computed  the monodromy of the geodesic open book induced by his divide~\cite{br}. 
Finally, the monodromy corresponding to the geodesic open book of Proposition~\ref{prop: boun} can be computed using similar techniques, or the method of Dehornoy-Liechti~\cite{dl}. Here is a summary of these computations:
\[
\begin{array}{c|c|c}
\textrm{geodesic~open~book} & \textrm{number~of~binding~components} & \textrm{monodromy}\\
\hline
\textrm{Birkhoff-Fried} & 4g+4& L^{g-1}R^2L^{g-1}R^2  \\
\hline
\textrm{Brunella} & 4g+2& L^{g-1}R^4L^{g-1}R^2 \\
\hline
\textrm{Prop~}\ref{prop: boun} & 4g& L^{g-1}R^4L^{g-1}R^4 \\
\hline
\end{array}
\]
\medskip

Cossarini and the first author \cite{cd} generalized the work of Birkhoff  \cite{b}, Fried \cite{f}  and Brunella \cite{br},  to construct and classify {\em negative}  Birkhoff cross sections of the geodesic flow associated to arbitrary Eulerian coorientations on a given finite set of geodesics on a hyperbolic surface $\S$. Moreover, Marty  \cite{mar} computed the monodromy of the corresponding open books for  $ST\S$ as a product of Dehn twists along curves explicitly described on a page.   These open books (when viewed for $ST^*\S$)  support the canonical contact structure $\xi$ as well. The following questions naturally arise as  a result of this discussion: Are these monodromy factorizations pairwise Hurwitz equivalent? Is the total space of the Lefschetz fibration filling each of these open books  diffeomorphic to $DT^*\S$? \\

\noindent {\bf {Acknowledgements}}: We are grateful to the anonymous referee whose suggestions helped us greatly improve the paper. 
We also thank Fran\c cois Laudenbach for exchanges concerning isotopic open books and Proposition~\ref{prop: onepageisotopic}. \\


\begin{thebibliography}{99999}

\bibitem{aca} N. A'Campo, \emph{Generic immersions of curves, knots, monodromy and Gordian number.} Inst. Hautes \'{E}tudes Sci. Publ. Math. 88, 151--169 (1998).

\bibitem{ac} N. A'Campo, \emph{Real deformations and complex topology of plane
curve singularities.} Ann. Fac. Sci. Toulouse Math. (6) 8(1), 5--23 (1999).

\bibitem{acb} N. A'Campo, \emph{Monodromy of real isolated singularities.} Topology 42(6), 1229--1240 (2003).

\bibitem{b} G. D. Birkhoff, \emph{Dynamical systems with two degrees of freedom.} Trans. Amer. Math. Soc. 18(2), 199--300 (1917).

\bibitem{bz} M. Boileau and H. Zieschang, \emph{Heegaard genus of closed orientable Seifert $3$-manifolds.} Invent. Math. 76(3), 455--468 (1984).

\bibitem{br}  M. Brunella, \emph{On the discrete Godbillon-Vey invariant and Dehn surgery on geodesic flows.} Ann. Fac. Sci. Toulouse Math. (6) 3(6), 335--344 (1994).

\bibitem{ce} K. Cieliebak and Y. Eliashberg, \emph{From Stein to Weinstein and back. Symplectic geometry of affine complex manifolds.} American Mathematical Society Colloquium Publications, 59. American Mathematical Society, Providence, RI, 2012.

\bibitem{cd} M. Cossarini and P. Dehornoy, \emph{Intersection norms on surfaces and Birkhoff cross sections.} ArXiv:1604.06688

\bibitem{d} P. Dehornoy, \emph{Invariants topologiques des orbites p\'{e}riodiques d'un champ de vecteurs.} Thesis (Ph.D.)-Ecole normale sup\'{e}rieure de Lyon, 2011.

\bibitem{dd} P. Dehornoy, \emph{Geodesic flow, left-handedness, and templates}, Algebr. Geom. Topol. 15, 1525-1597 (2015).

\bibitem{dl} P. Dehornoy, L. Liechti, \emph{Divide monodromies and antitwists on surfaces}, ArXiv:1910.00851

\bibitem{et} J. B. Etnyre, {\em Planar open book decompositions and contact structures.} Int. Math. Res. Not. 79, 4255--4267 (2004).

\bibitem{et06} J. B. Etnyre, {\em Lectures on open book decompositions and contact structures.} Floer homology, gauge theory, and low-dimensional topology, 103-41. Clay Math. Proc., 5, American Mathematical Society, Providence, RI, 2006

\bibitem{eo} J. B. Etnyre and B. Ozbagci,  {\em Invariants of contact structures from open books.} Trans. Amer. Math. Soc. 360(6), 3133--3151 (2008).

\bibitem{f} D. Fried, \emph{Transitive Anosov flows and pseudo-Anosov maps.} Topology 22(3), 299--303 (1983).

\bibitem{ge} H. Geiges,  {\em An introduction to contact topology.} Cambridge Studies in Advanced Mathematics, 109. Cambridge University Press, Cambridge, 2008.

\bibitem{gh} E. Ghys, {\em Sur l'invariance topologique de la classe de Godbillon-Vey}, {Ann. Inst. Fourier} {37}, 59--76  (1987).

\bibitem{g} E. Giroux,  {\em Convexit\'{e} en topologie de contact.} (French) [Convexity in contact topology] Comment. Math. Helv. 66(4), 637-677 (1991).

\bibitem{gi} E. Giroux,  {\em  G\'{e}om\'{e}trie de contact: de la dimension trois vers les dimensions sup\'{e}rieures. (French)} [Contact geometry: from dimension three to higher dimensions] Proceedings of the International Congress of Mathematicians, Vol. II (Beijing, 2002), 405-414, Higher Ed. Press, Beijing, 2002.

\bibitem{gr} M. Gromov,  {\em Three remarks on geodesic dynamics and fundamental group.} preprint SUNY (1976),
reprinted in Enseign. Math. 46, 391--402 (2000).

\bibitem{h} N. Hashiguchi, {\em On the Anosov diffeomorphisms corresponding to geodesic flow on negatively curved closed surfaces}, J. Fac. Sci. Univ. Tokyo 37, 485--494 (1990).

\bibitem{i2} M. Ishikawa, {\em Plumbing constructions of connected divides and the Milnor fibers of plane curve singularities.} Indag. Mathem. 13(4), 499--514 (2002).

\bibitem{i} M. Ishikawa, {\em  Tangent circle bundles admit positive open book decompositions along arbitrary links.} Topology 43(1), 215--232 (2004).

\bibitem{jo}
    J. Johns, \emph{Lefschetz fibrations on cotangent bundles of two-manifolds.}
    Proceedings of the G\"{o}kova Geometry-Topology Conference 2011, 53-84,
    Int. Press, Somerville, MA, 2012.

\bibitem{kh} A. Katok and B. Hasselblatt, {\em Introduction to the modern theory of dynamical systems.} Encyclopedia of Mathematics and its Applications 54 (1995). Cambridge University Press. 

\bibitem{marty} T. Marty, \emph{Birkhoff sections and Anosov flows.} Thesis (Ph.D.)-Universit\'e Grenoble Alpes, 2021.

\bibitem{mar} T. Marty, \emph{First-return maps of Birkhoff sections of the geodesic flow.} Algebr. Geom. Topol. 22(5), 2355--2394 (2022).

\bibitem{m} P. Massot, \emph{Two remarks on the support genus question.}

 \url{https://www.math.u-psud.fr/~pmassot/files/exposition/genus.pdf}

\bibitem{ma} P. Massot, \emph{Topological methods in 3-dimensional contact geometry.} Contact and symplectic topology, 27-83, Bolyai Soc. Math. Stud., 26, J\'{a}nos Bolyai Math. Soc., Budapest, 2014.

\bibitem{oz} B. Ozbagci, \emph{Genus one Lefschetz fibrations on disk cotangent bundles of surfaces.} Rev. Mat. Iberoam. 36, 1149--1166 (2020).

\bibitem{ro} R. Roussarie, \emph{Plongements dans les variétés feuilletées et classification de feuilletages sans holonomie.} Publ. Math. Inst. Hautes \'Etudes Sci. 43, 101--141 (1974).

\bibitem{sa}  K. Sackel, {\em Getting a Handle on Contact Manifolds.} Thesis (Ph.D.)-Massachusetts Institute of Technology. 2019.

\bibitem{s} P. Seidel, {\em Fukaya categories and Picard-Lefschetz theory.} Zurich Lectures in Advanced Mathematics. European Mathematical Society (EMS), Z\"urich, 2008.

\bibitem{vhm} J. Van Horn-Morris, {\em Constructions of open book decompositions.} PhD thesis, University of Texas at Austin, 2007.

\end{thebibliography}
\end{document}